\documentclass[12pt]{amsart}

\usepackage{amsmath,amssymb,ifthen}
\usepackage{fullpage}
\usepackage{graphicx,psfrag,subfigure}
\usepackage{color}
\usepackage{moreverb}

\def\R{{\mathbb R}}
\def\N{{\mathbb N}}
\def\EE{{\mathcal E}}
\def\KK{{\mathcal K}}
\def\HH{{\mathcal H}}
\def\OO{{\mathcal O}}
\def\PP{{\mathcal P}}
\def\SS{{\mathcal S}}
\def\TT{{\mathcal T}}
\def\XX{{\mathcal X}}
\def\YY{{\mathcal Y}}
\def\diam{{\rm diam}}

\def\norm#1#2{\|#1\|_{#2}}

\def\enorm#1{|\hspace*{-.5mm}|\hspace*{-.5mm}|#1|\hspace*{-.5mm}|\hspace*{-.5mm}|}

\def\dual#1#2{\langle#1\,,\,#2\rangle}

\def\uu{\boldsymbol u}

\def\vv{\boldsymbol{v}}

\def\xx{\boldsymbol{x}}
\def\yy{\boldsymbol{y}}
\def\ff{\boldsymbol{f}}

\def\LL{\boldsymbol{L}}

\def\div{{\rm div}}
\def\normal{\boldsymbol{n}}

\def\slp{\mathfrak{V}} 
\def\dlp{\mathfrak{K}} 
\def\hyp{\mathfrak{W}} 
\def\mon{\mathfrak{A}} 

\def\Omegaext{\Omega^{\rm ext}}

\def\strain{\boldsymbol\epsilon}
\def\stress{\boldsymbol\sigma}
\def\H{\boldsymbol{H}}
\def\RR{{\mathcal R}}   
\def\bignull{\boldsymbol{0}} 
\def\linhull{{\rm span}} 
\def\convhull#1{{\rm conv}\left\{ #1 \right\}}
\def\pphi{\boldsymbol \phi}
\def\ppsi{\boldsymbol \psi}
\def\GG{\boldsymbol G}
\def\II{\boldsymbol{I}}
\def\xxi{\boldsymbol \xi}
\def\bb{b}
\def\BB{\mathfrak{B}}
\def\FF{F}

\def\rr{\boldsymbol r}
\def\cchi{\boldsymbol\chi}
\def\ww{\boldsymbol w}
\def\ee{\boldsymbol e}
\def\zz{\boldsymbol z}
\def\aa{\boldsymbol a}
\def\bb{\boldsymbol b}
\def\cc{\boldsymbol c}
\def\dd{\boldsymbol d}
\def\cm{\boldsymbol s}
\renewcommand{\vector}[3]{\begin{pmatrix} #1 \\ #2 \ifthenelse{\equal{#3}{}}{}{\\#3}\end{pmatrix}}
\def\lext{\lambda^{\rm ext}}
\def\mext{\mu^{\rm ext}}
\def\lineg{\mathfrak{g}}
\def\linef{\mathfrak{f}}
\def\lineh{\mathfrak{h}}
\def\stp{\mathfrak{S}}
\def\wwd{\ww^{\rm D}}

\newcounter{constantsnumber}
\def\namec#1#2{%
  \ifthenelse{\equal{#1}{lipschitz}}{C_{\rm lip}}{%
  \ifthenelse{\equal{#1}{monotone}}{C_{\rm mon}}{%
  \ifthenelse{\equal{#1}{cea}}{C_{\mbox{\rm\scriptsize C\'ea}}}{%
  \ifthenelse{\equal{#1}{norm}}{C_{\rm norm}}{%
  \ifthenelse{\equal{#1}{mon}}{{C}_{\rm mon}}{
  \ifthenelse{\equal{#1}{lip}}{{C}_{\rm lip}}{
  \ifthenelse{\equal{#1}{monA}}{c_{\rm mon}}{
  \ifthenelse{\equal{#1}{lipA}}{c_{\rm lip}}{
  \ifthenelse{\equal{#1}{normequiv1}}{c_{\rm norm}}{ 
  \ifthenelse{\equal{#2}{newcounter}}{\refstepcounter{constantsnumber}\label{const#1}}{}C_{\ref{const#1}}}%
  }}}}}}}}}
\def\setc#1{\namec{#1}{newcounter}}
\def\c#1{\namec{#1}{reference}}

\newtheorem{theorem}{Theorem}
\newtheorem{proposition}[theorem]{Proposition}
\newtheorem{lemma}[theorem]{Lemma}

\newenvironment{remark}{\bigskip\noindent\textbf{Remark.}\ \it}{\qed\bigskip}

\def\subsection#1
{
 \bigskip

 \refstepcounter{subsection}
 {\noindent\bf\arabic{section}.\arabic{subsection}.~#1.~}
}

\begin{document}

\title[Stability of FEM-BEM couplings in elasticity]{Stability of symmetric and nonsymmetric FEM-BEM couplings for nonlinear
elasticity problems}
\date{\today}

\author{M.~Feischl}
\author{T.~F\"uhrer}
\author{M.~Karkulik}
\author{D.~Praetorius}
\address{Institute for Analysis and Scientific Computing,
       Vienna University of Technology,
       Wiedner Hauptstra\ss{}e 8--10,
       A-1040 Wien, Austria}
\email{\{Michael.Feischl,\,Thomas.Fuehrer,\,Dirk.Praetorius\}@tuwien.ac.at}
\address{Facultad de Matem\'aticas, 
Pontificia Universidad Cat\'olica de Chile, 
Avenida Vicu\~na Mackenna 4860, Santiago, Chile}
\email{mkarkulik@mat.puc.cl}

\keywords{FEM-BEM coupling, elasticity, nonlinearities, well-posedness}
\subjclass[2000]{65N30, 65N15, 65N38}
\thanks{{\bf Acknowledgment:}
  The research of the authors is supported through the FWF research
  project \textit{Adaptive Boundary Element Method}, see {\tt
  http://www.asc.tuwien.ac.at/abem/}, funded by the Austrian Science Fund (FWF)
  under grant P21732, as well as through the \textit{Innovative Projects
  Initiative} of Vienna University of Technology. This support is thankfully
  acknowledged.
  The authors thank Ernst P. Stephan (University of Hannover) and Heiko
  Gimperlein (University of Copenhagen) for fruitful discussions and careful
  revisions of earlier versions of this manuscript.
  }
\begin{abstract}
We consider symmetric as well as non-symmetric coupling formulations
of FEM and BEM in the frame of nonlinear elasticity problems. 
In particular, the Johnson-N\'ed\'elec coupling is analyzed.
We prove that these coupling formulations are well-posed and allow for unique
Galerkin solutions if standard discretizations by piecewise polynomials are
employed.
Unlike prior works, our analysis does neither rely on an interior Dirichlet
boundary to tackle the rigid body motions nor on any assumption on the mesh-size
of the discretization used.
\end{abstract}
\maketitle
\section{Introduction \& overview}\label{sec:intro}
\noindent
The coupling of the finite element method (FEM) and the boundary element method
(BEM) became very popular when it first appeared in the late seventies of the
last century. These methods combine the advantages of FEM, which allows to
resolve nonlinear problems in bounded domains, and BEM, which allows to solve 
problems with elliptic differential operators with constant coefficients in 
unbounded domains. The two methods are coupled via transmission conditions on 
the coupling boundary.

In 1979,
Zienkiewicz and co-workers~\cite{zkb79} introduced a non-symmetric
one-equation coupling which is based on the first equation of the Calder\'on 
system and only relies on the simple-layer potential $\slp$ as well as the
double-layer potential $\dlp$. In 1980, Johnson \& N\'ed\'elec~\cite{johned} 
gave a first mathematical proof that this coupling procedure is well-posed
and stable. This coupling is therefore also referred to as Johnson-N\'ed\'elec 
coupling. Their analysis relied on Fredholm theory and the compactness of 
$\dlp$ and was thus restricted to smooth coupling boundaries.  
Based on these works,
other coupling methods such as the one-equation Bielak-MacCamy coupling and 
the (quasi-symmetric) Bielak-MacCamy coupling~\cite{bmc} have been proposed.
The requirement for smooth boundaries is a severe restriction when dealing 
with standard FEM or BEM discretizations. Moreover, numerical experiments 
in~\cite{coers} gave empirical evidence that this assumption and hence
the compactness of $\dlp$ can be avoided. It took until 2009 when
Sayas~\cite{sayas09} gave a first mathematical proof for the stability
of the Johnson-N\'ed\'elec coupling on polygonal boundaries. 

In the meantime and because of the lack of satisfying theory, the symmetric 
coupling has been proposed independently by Costabel~\cite{costabel} and 
Han~\cite{han90}. Relying on the symmetric
formulation of the exterior Steklov-Poincar\'e operator,~\cite{costabel,han90}
proved stability of the symmetric coupling. Early works 
including~\cite{costabel,coststeph88,han90,steph92} used interior Dirichlet boundaries 
to tackle constant functions for Laplace 
transmission problems resp.\ rigid body motions for elasticity problems. We 
also refer to the monograph~\cite{gh95} for further details.

To the best of the authors' knowledge, the very first work which avoided the use
of an additional artificial Dirichlet boundary was~\cite{cs1995}, where 
a nonlinear Laplace transmission problem is considered. In the latter work the authors 
used the exterior Steklov-Poincar\'e operator to reduce the coupling equations
to an operator equation with a strongly monotone operator. Although their analysis
avoids an artificial Dirichlet boundary, their proof of ellipticity of the 
discrete exterior Steklov-Poincar\'e operator, and hence of unique solvability
of the discrete coupling equations, involved sufficiently small
mesh-sizes. Bootstrapping the original proof of~\cite{cs1995}, this assumption
could recently be removed~\cite{afp}. 
The authors of~\cite{cfs} then transferred the ideas of~\cite{cs1995} to
nonlinear elasticity problems in 2D.
From an implementational point of view, however, the symmetric coupling seems 
not to be as attractive as the one-equation coupling methods, since all four
integral operators of the Calder\'on system are involved. 

While Sayas' work~\cite{sayas09} focused on the linear Yukawa transmission 
problem as well as the Laplace transmission problem, Steinbach~\cite{s2}
proved stability for a class of linear Laplace transmission problems. He 
introduced an explicit stabilization for the coupling equations so that the
stabilized equations turn out to be elliptic. However, the computation of the stabilization
requires the numerical solution of an additional boundary integral equation at
every discrete level. 
Of \& Steinbach~\cite{os} improved the results from~\cite{s2}, and also gave a
sharp condition under which the stabilized problem is elliptic.
Based on and inspired by the analysis of~\cite{sayas09,s2}, Aurada et 
al.~\cite{affkmp} introduced the idea of \emph{implicit stabilization}. They
proved that all (continuous and discrete) coupling equations are equivalent 
to associated stabilized formulations, even with the same solution. Since 
the stabilized formulations appear to be elliptic, this proves well-posedness 
and stability of the original coupling formulations, i.e.\ no explicit stabilization
is needed or has to be implemented in practice.
For the Johnson-N\'ed\'elec
and Bielak-MacCamy coupling, their analysis covers the same problem class as~\cite{s2}
and moreover extends it to handle certain nonlinearities. For the symmetric coupling,
the analysis of~\cite{affkmp} provides an alternate proof for the results of~\cite{cs1995},
but avoids any restriction on the mesh-size.

In the very recent work~\cite{s3}, Steinbach extended the results
from~\cite{os,s2} to linear elasticity problems. We also refer to~\cite{ghs09}, where 
stability of the Johnson-N\'ed\'elec, the one-equation Bielak-MacCamy, and the 
(quasi-) symmetric Bielak-MacCamy coupling for a Yukawa transmission problem is proven. 
Moreover, they also show that the Johnson-N\'ed\'elec coupling applied to
elasticity problems with interior Dirichlet boundary is stable for certain
specific material parameters.

In our work, we consider (possibly) nonlinear transmission problems in 
elasticity. As a novelty, we introduce a general framework to handle both, the 
symmetric and non-symmetric couplings.  We transfer and extend the idea of 
implicit theoretical stabilization from~\cite{affkmp} to the present setting.
This allows us to prove well-posedness of the non-stabilized coupling 
equations, although they seem to lack ellipticity.
The basic idea is the following: We add appropriate terms to the right-hand side
and left-hand side of the equations and prove that this modified (continuous or 
discrete) problem is equivalent 
to the original problem, even with the same solution. This means that a solution 
of the modified problem also solves the original problem and vice versa. Then, 
we prove existence and uniqueness of the solution of the modified problem and, 
due to equivalence, we infer that the original problem is
well-posed. As in~\cite{cfs,ghs09,s3}, our analysis applies to polygonal 
resp.\ polyhedral coupling boundaries. From our point of view, the advances over 
the state of art are fourfold:
\begin{itemize}
\item Unlike~\cite{cfs}, we do not have to impose any assumption on the mesh-size $h$
in case of the symmetric coupling.
\item Unlike~\cite{coststeph88,gh95,ghs09,han90,steph92}, we avoid the
use of an artificial Dirichlet boundary to tackle the rigid body motions.
\item Unlike~\cite{s3}, we prove well-posedness and stability of the original 
coupling equations and thus avoid any explicit stabilization which requires
the solution of additional boundary integral equations.
\item Unlike~\cite{ghs09,s3}, our analysis for the one-equation couplings also 
covers certain nonlinear material laws, e.g. nonlinear elastic Hencky material
laws.
\end{itemize}

The remainder of this work is organized as follows: In Section~\ref{sec:modelproblem}, 
we state the nonlinear elasticity transmission problem as well as the precise 
assumptions on the nonlinearity.
Furthermore, we fix some notation and collect some important properties of
linear elasticity problems and boundary integral operators, which are used
throughout the work.

Section~\ref{sec:sym} deals with the symmetric coupling. Here, we introduce the
concept of implicit stabilization, and prove unique solvability of the
coupling equations (Theorem~\ref{thm:solvability}). We prove that the
necessary assumption on the BEM discretization is satisfied, if the BEM ansatz
space contains the piecewise constants (Theorem~\ref{thm:assumption}).

In Section~\ref{sec:jn}, we apply the ideas worked out in Section~\ref{sec:sym}
to the Johnson-N\'ed\'elec coupling.
Moreover, we incorporate analytical techniques from~\cite{os,s3} to our method 
and prove unique solvability under an additional assumption on the material 
parameters.

Finally, the short Section~\ref{sec:bmc} analyzes the one-equation Bielak-MacCamy coupling
which seems not to be as present as the symmetric resp. Johnson-N\'ed\'elec
coupling in the literature.

\section{Model problem}\label{sec:modelproblem}
\noindent
Throughout this work, $\Omega\subseteq\R^d$ ($d=2,3$) denotes a connected
Lipschitz domain with polyhedral boundary $\Gamma=\partial\Omega$ and complement
$\Omegaext = \R^d\backslash\overline\Omega$.

\subsection{Notation}
We use bold symbols for $d$-dimensional vectors, e.g. $\xx$, and vector valued
functions $\uu:\R^d\to\R^d$. The components of such objects will be indexed,
e.g. $\uu=(\uu_1,\uu_2)^T$.
For a set of $n\in\N$ or a sequence of vector-valued objects we use upper
indices for each element of the set resp. sequence, i.e. $\{\uu^j\}_{j=1}^n$
resp. $\{\uu^j\}_{j=1}^\infty$.

Let $X\subseteq\R^d$ be a nonempty, measurable set and let $L^2(X)$ resp.
$H^1(X), H^{1/2}(X) = (H^{-1/2}(X))^*$ denote the usual Lebesgue resp. Sobolev
spaces. We define $\dual{u}{v}_X := \int_X uv\,dx$ for $u,v\in L^2(X)$. For
$u\in H^{-1/2}(\Gamma), v\in H^{1/2}(\Gamma)$, the brackets
$\dual{u}v_\Gamma$ denote the continuously extended $L^2$-scalar product.

For vector-valued Lebesgue resp.\ Sobolev spaces we use bold symbols, i.e. 
$\LL^2(X) := [L^2(X)]^d$ resp. $\H^1(X) := [H^1(X)]^d$ and so on.
Then, we define $\dual\uu\vv_X := \int_X \uu\cdot\vv \,dx$ for
$\uu,\vv\in\LL^2(X)$.
The product space $\HH:= \H^1(\Omega) \times \H^{-1/2}(\Gamma)$, equipped with
the norm $\norm{(\uu,\pphi)}\HH := \big( \norm\uu{\H^1(\Omega)}^2 +
\norm\pphi{\H^{-1/2}(\Gamma)}^2 \big)^{1/2}$ for $(\uu,\pphi)\in\HH$, will be used
throughout the work.
Moreover, let $\strain(\uu) : \stress(\vv) = \sum_{j,k=1}^d
\strain_{jk}(\uu)\stress_{jk}(\vv)$ denote the Frobenius inner product for
arbitrary tensors $\strain,\stress$, and define
$\dual{\stress(\uu)}{\strain(\vv)}_\Omega := \int_\Omega\stress(\uu):\strain(\vv)
\,dx$.
The divergence $\div(\strain(\uu))$ of a tensor is understood component-wise
$(\div(\strain(\uu)))_j = \sum_{k=1}^d \partial \strain_{jk}(\uu) / \partial
x_k$ for $j=1,\ldots,d$.
Finally, we write $\norm{\strain(\uu)}{\LL^2(\Omega)}^2 :=
\dual{\strain(\uu)}{\strain(\uu)}_\Omega$.

\subsection{Linear elasticity}
As usual, the linear and symmetric strain tensor $\strain$ is defined
component-wise by
\begin{align}
  \strain_{jk}(\uu) = \frac12\Big( \frac{\partial\uu_j}{\partial \xx_k} +
  \frac{\partial\uu_k}{\partial \xx_j}\Big)
\end{align}
for all $\uu\in\H^1(\Omega)$ and $j,k=1,\dots,d$.
Together with the Young modulus $E>0$ and the Poisson ratio
$\nu\in(0,\tfrac12)$, the linear stress tensor $\stress$ is defined by
\begin{align}\label{eq:def:stress}
  \stress_{jk}(\uu) = \delta_{jk}\frac{E\nu}{(1+\nu)(1-2\nu)} \div\,\uu + 
  \frac{E}{1+\nu}\strain_{jk}(\uu)
\end{align}
for all $\uu\in\H^1(\Omega)$ and $j,k=1,\dots,d$.
To simplify notation, one usually introduces the so-called Lam\'e constants
\begin{subequations}\label{eq:deflame}
\begin{align}
  \lambda := \frac{E\nu}{(1+\nu)(1-2\nu)} \quad\text{and}\quad 
  \mu := \frac{E}{2(1+\nu)}.
\end{align}
With the identity matrix $\II\in\R^{d\times d}$, the stress tensor $\stress$ then satisfies
\begin{align} 
\begin{split}
  \stress(\uu) &= \lambda\div(\uu)\II + 2\mu\strain(\uu) \quad\text{as well
  as}\\  
  \div\,\stress(\uu) &= \mu\Delta\uu + (\lambda+\mu)\nabla\div(\uu) \quad\text{in
  3D, and} \\
  \div\,\stress(\uu) &= \mu\Delta\uu + \Big(\frac{E\nu}{(1+\nu)(1-\nu)}+\mu\Big)\nabla\div(\uu) \quad\text{in
  2D}.
\end{split}
\end{align}
\end{subequations}
The kernel of the strain tensor $\strain$ is given by the space of rigid body
motions $\RR_d := \ker(\strain) =
\{\vv\in\H^1(\Omega)\,:\,\strain(\vv)=\bignull\}$ which reads 
\begin{align} 
  \RR_2:= \linhull 
             \left\{\begin{pmatrix}1\\0 \end{pmatrix},
                   \begin{pmatrix}0\\1 \end{pmatrix},
                   \begin{pmatrix}-\xx_2\\\xx_1 \end{pmatrix}
		     \right\} \quad\text{for }d=2
\end{align}
and
\begin{align}
  \RR_3 := \linhull 
             \left\{ \begin{pmatrix}1\\0\\0 \end{pmatrix},
                     \begin{pmatrix}0\\1\\0 \end{pmatrix},
                     \begin{pmatrix}0\\0\\1 \end{pmatrix},
                     \begin{pmatrix}-\xx_2\\\xx_1\\0 \end{pmatrix},
                     \begin{pmatrix}0\\-\xx_3\\\xx_2 \end{pmatrix},
                     \begin{pmatrix}\xx_3\\0\\-\xx_1 \end{pmatrix}
		       \right\} \quad\text{for }d=3.
\end{align}
Therefore, it holds $\stress(\vv) = \bignull$ for all
$\vv\in\RR_d$ as well.

\subsection{Nonlinear transmission problem}\label{sec:modelproblem:tp}
As model problem, we consider the following nonlinear transmission problem in free space
\begin{subequations}\label{eq:modelproblem}
\begin{align}
  -\div\,\mon \strain(\uu) &= \ff \quad\text{in } \Omega,
  \label{eq:modelproblem:int} \\
  -\div\,\stress^{\rm ext}(\uu^{\rm ext}) &= \bignull \quad\text{in }\Omegaext,
  \label{eq:modelproblem:ext}\\
  \uu-\uu^{\rm ext} &= \uu_0, \quad\text{on }\Gamma \label{eq:modelproblem:jumpu}\\
  \big(\mon\strain(\uu) - \stress^{\rm ext}(\uu^{\rm ext})\big)\normal &= \pphi_0, \quad\text{on
  }\Gamma, \label{eq:modelproblem:jumpdudn}\\
  |\uu^{\rm ext}(\xx)| &= \OO(1/|\xx|) \quad\text{for }|\xx| \to \infty, \label{eq:modelproblem:radcond}
\end{align}
\end{subequations}
where $\normal$ denotes the exterior unit normal vector on $\Gamma$ pointing from $\Omega$ to
$\Omegaext$. The nonlinear operator $\mon: \R_{\rm sym}^{d\times d} \to \R_{\rm
sym}^{d\times d}$
is used to describe a (possibly) nonlinear material law in $\Omega$. Our
assumptions on the operator $\mon$ and a more detailed description will be
given later on in Section~\ref{sec:modelproblem:mon}.
The stress tensor $\stress^{\rm ext}$, which corresponds to the linear
elasticity problem in the exterior domain, is defined as
in~\eqref{eq:def:stress}--\eqref{eq:deflame} with Lam\'e constants $\lext,\mext$.
For given data $\ff\in\LL^2(\Omega), \uu_0\in\H^{1/2}(\Gamma)$, and $\pphi_0
\in\H^{-1/2}(\Gamma)$, 
problem~\eqref{eq:modelproblem} admits unique solutions $\uu\in\H^1(\Omega)$ and
$\uu^{\rm ext} \in\H_{\rm loc}^1(\Omegaext)$ in 3D. This follows from the
equivalence to the symmetric coupling and its well-posedness, see
Section~\ref{sec:sym}.
For the two-dimensional case, the two-dimensional compatibility condition
\begin{align}\label{eq:compatibility2D}
  \dual{\ff}{\ee^j}_\Omega + \dual{\pphi_0}{\ee^j}_\Gamma = 0 \quad j=1,2
\end{align}
ensures unique solvability. Here, $\ee^j$ are the standard unit normal vectors
in $\R^2$. We refer to~\cite{hw} for further details.

\begin{remark}
  The radiation condition~\eqref{eq:modelproblem:radcond} can be
  generalized to
  \begin{align}\label{eq:genradcond}
    \uu^{\rm ext}(\xx) = -\GG(\xx)\aa + \rr + \OO\Big(|\xx|^{1-d}\Big)
	\quad\text{for } |\xx|\to\infty,
      \end{align}
      with $\rr\in\RR_d$, $\aa\in\R^d$, and $\GG(\cdot)$ being the Kelvin
      tensor defined in~\eqref{eq:defKelvin} below. 
      Moreover, $\aa = \int_\Gamma \stress^{\rm ext}(\uu^{\rm ext})\normal \,d\Gamma$.
      A solution
      of~\eqref{eq:modelproblem:int}--\eqref{eq:modelproblem:jumpdudn}
      with~\eqref{eq:genradcond} is unique. To see this, we stress that the pair
      $(\uu,\uu^{\rm ext})$
      solves~\eqref{eq:modelproblem:int}--\eqref{eq:modelproblem:jumpdudn}
      with~\eqref{eq:genradcond} if and only if the pair $(\widetilde\uu,\widetilde\uu^{\rm
      ext}) = (\uu-\rr,\uu^{\rm ext}-\rr)$
      solves~\eqref{eq:modelproblem:int}--\eqref{eq:modelproblem:jumpdudn} with
      \begin{align}\label{eq:genradcond2}
	\widetilde\uu^{\rm ext}(\xx) = -\GG(\xx)\aa + \OO(1/|\xx|^{1-d})
	\quad\text{for }|\xx|\to\infty
      \end{align}
      and vice versa. Our analysis presented in this work still holds true if we
      replace $(\uu,\uu^{\rm ext})$ by $(\widetilde\uu,\widetilde\uu^{\rm ext})$
      in~\eqref{eq:modelproblem:int}--\eqref{eq:modelproblem:jumpdudn} and the
      radiation condition~\eqref{eq:modelproblem:radcond}
      by~\eqref{eq:genradcond2}.
      Note that $\aa=\bignull$ implies the compatibility
      condition~\eqref{eq:compatibility2D} in 2D. Therefore, the compatibility
      condition can be dropped in 2D for $\aa\neq\bignull$.
      In general, the constant $\aa$ is determined by $\aa = \int_\Omega \ff
      \,d\xx + \int_\Gamma \pphi_0 \,d\Gamma$, which follows
      from~\eqref{eq:modelproblem:int} and~\eqref{eq:modelproblem:jumpdudn}.
      Furthermore, note that $|\GG(\xx)| = \OO(1/|\xx|)$ for $|\xx|\to\infty$
      and $d=3$. Hence,~\eqref{eq:genradcond2} coincides
      with~\eqref{eq:modelproblem:radcond} in 3D.   
\end{remark}

\subsection{Boundary integral operators}\label{sec:intop}
The fundamental solution for linear elastostatics is given by the Kelvin tensor
$\GG(\zz)\in\R_{\rm sym}^{d\times d}$ with
\begin{align}\label{eq:defKelvin} 
  \GG_{jk}(\zz) =
  \frac{\lambda+\mu}{2\mu(\lambda+2\mu)}\left(\frac{\lambda+3\mu}{\lambda+\mu}G(\zz)\delta_{jk}
  + \frac{\zz_j\zz_k}{|\zz|^d}\right)
\end{align}
for all $\zz\in\R^d\backslash\{\bignull\}$ and  $j,k=1,\dots,d$, where $G$ denotes the fundamental solution of the
Laplacian, i.e.
\begin{align}
  G(\zz) =
   \begin{cases}
     -\tfrac1{2\pi} \log |\zz| &\quad\text{for }
     d=2, \\
     \frac1{4\pi} \frac1{|\zz|} &\quad\text{for } d=3.
   \end{cases}
\end{align}
Throughout this work, $\slp$ denotes the simple-layer potential, $\dlp$
the double-layer potential with adjoint $\dlp'$, and $\hyp$ denotes the
hypersingular integral operator.
The boundary integral operators formally read for $\xx\in\Gamma$
as follows:
\begin{align}
  \slp\pphi(\xx) &:= \int_\Gamma \GG(\xx-\yy)\pphi(\yy) \,d\Gamma_{\yy}, \\
  \dlp\vv(\xx) &:= \int_\Gamma \gamma_{1,\yy}^{\rm int}\GG(\xx-\yy)\vv(\yy)  \,d\Gamma_{\yy},  \\
  \hyp\vv(\xx) &:= -\gamma_{1,\xx}^{\rm int} \dlp\vv(\xx),
\end{align}
where $\gamma_{1,\xx}^{\rm int}$ denotes the conormal derivative with respect to
$\xx$ defined in~\eqref{eq:conormal} below.
These operators can be extended to continuous linear operators 
\begin{align}
  \slp &\in L(\H^{-1/2}(\Gamma); \H^{1/2}(\Gamma)), \\
  \dlp &\in L(\H^{1/2}(\Gamma); \H^{1/2}(\Gamma)), \\
  \dlp' &\in L(\H^{-1/2}(\Gamma); \H^{-1/2}(\Gamma)), \\
  \hyp &\in L(\H^{1/2}(\Gamma); \H^{-1/2}(\Gamma)).
\end{align}
We summarize some important properties of these operators. In 3D, the simple-layer
potential is symmetric and elliptic, i.e.\ there holds
\begin{align}
  \dual{\pphi}{\slp\ppsi}_\Gamma = \dual\ppsi{\slp\pphi}_\Gamma
  \quad\text{and}\quad
  \norm{\pphi}{\H^{-1/2}(\Gamma)}^2 \lesssim \dual{\pphi}{\slp\pphi}_\Gamma
  \quad\text{for all }\pphi,\ppsi\in\H^{-1/2}(\Gamma).
\end{align}
Thus, $\norm\pphi\slp := \dual{\pphi}{\slp\pphi}_\Gamma^{1/2}$ defines an equivalent Hilbert norm on
$\H^{-1/2}(\Gamma)$. In 2D, ellipticity can be achieved by an
appropriate scaling of the domain $\Omega$, see e.g.~\cite[Section~6.7]{s} for further
details, and we may thus assume that $\slp$ is elliptic.
The hypersingular operator is symmetric positive semidefinite, i.e. 
\begin{align}\label{eq:hypsemidef}
  \dual{\hyp\vv}\ww_\Gamma =\dual{\hyp\ww}\vv_\Gamma \quad\text{and}\quad
  \dual{\hyp\vv}{\vv}_\Gamma \geq0 \quad\text{for all }\vv,\ww\in\H^{1/2}(\Gamma).
\end{align}
There holds $\ker(\hyp) = \ker(\tfrac12+\dlp) = \RR_d$, see
e.g.~\cite[Section~6.7]{s}.
Throughout this work, the boundary integral operators $\slp,\dlp,\dlp'$, and
$\hyp$ are always understood with respect to the exterior Lam\'e constants
$\lext,\mext$.
We stress that the natural conormal derivative $\gamma_1^{\rm int}$ is
\begin{align}\label{eq:conormal}
  \gamma_1^{\rm int}\uu := \stress(\uu)\normal \quad\text{on }\Gamma.
\end{align}
There holds Betti's first formula, cf. e.g.~\cite[Section~4.2]{s},
\begin{align}\label{eq:betti}
  \dual{\stress(\uu)}{\strain(\vv)}_\Omega = \dual{L\uu}\vv_\Omega +
  \dual{\gamma_1^{\rm int}(\uu)}\vv_\Gamma,
\end{align}
with the linear differential operator $L\uu = -\div\,\stress(\uu)$. 

\subsection{Nonlinear material law and strongly monotone operators}
\label{sec:modelproblem:mon}
We assume $\mon$ to be strongly monotone~\eqref{eq:monA} and Lipschitz
continuous~\eqref{eq:lipA}, i.e. there exist constants $\c{monA}>0$ and $\c{lipA}>0$ such that
\begin{align} \label{eq:monA}
 \c{monA} \norm{\strain(\uu)-\strain(\vv)}{\LL^2(\Omega)}^2 &\leq 
 \dual{\mon\strain(\uu)-\mon\strain(\vv)}{\strain(\uu)-\strain(\vv)}_\Omega,  \quad\text{and }\\
 \label{eq:lipA}
  \norm{\mon\strain(\uu)-\mon\strain(\vv)}{\LL^2(\Omega)} &\leq \c{lipA}
  \norm{\strain(\uu)-\strain(\vv)}{\LL^2(\Omega)}
\end{align}
for all $\uu,\vv\in\H^1(\Omega)$.
In the case $\mon\strain(\cdot) = \stress(\cdot)$ there holds, cf.~\cite[Section~4.2]{s},
\begin{align}\label{eq:stresscont}
  |\dual{\stress(\uu)}{\strain(\vv)}_\Omega| \leq \c{stresscont}
  \norm{\strain(\uu)}{\LL^2(\Omega)}\norm{\strain(\vv)}{\LL^2(\Omega)},
\end{align}
and
\begin{align}\label{eq:stressell}
  \dual{\stress(\uu)}{\strain(\uu)}_\Omega \geq \c{stressell}
  \norm{\strain(\uu)}{\LL^2(\Omega)}^2 
\end{align}
for all $\uu,\vv\in\H^1(\Omega)$, with constants
$\setc{stresscont} = 6\lambda+4\mu$ and $\setc{stressell} = 2\mu$.

An example for a nonlinear material law is the nonlinear elastic Hencky
material, obeying the Hencky-Von Mises stress-strain relation
\begin{align}\label{eq:hencky}
  \mon\strain(\uu) := (K-\tfrac2d \widetilde\mu(\gamma(\strain(u))) ) \div(\uu) \II + 2
  \widetilde\mu(\gamma(\strain(\uu))) \strain(\uu) 
\end{align}
with $K>0$ being the constant bulk modulus and Lam\'e function $\gamma(\strain(\uu)) :=
(\strain(\uu)-\tfrac1d \div(\uu)\II) : (\strain(\uu)-\tfrac1d \div(\uu)\II)$.
Here, $\widetilde\mu : \R_{\geq 0} \to \R_+$ denotes a function such that the operator
from~\eqref{eq:hencky} satisfies~\eqref{eq:monA}--\eqref{eq:lipA}.
Further information on the Hencky material law can be found in 
e.g.~\cite{cfs,coststeph90,steph92,zeidler2} and the references therein.

\subsection{Discretization}
Let $\TT_h$ denote a regular triangulation of $\Omega$ and let $\EE_h^\Gamma$
denote a regular triangulation of $\Gamma$. Here, regularity is understood in
the sense of Ciarlet.
We define the local mesh-width function $h$ by
$h|_X := \diam(X)$ for $X\in\TT_h$ resp. $X\in\EE_h^\Gamma$.  
Moreover, let $\KK_h^\Omega$ denote the set of nodes of $\TT_h$ and let
$\KK_h^\Gamma$ denote the set of nodes of $\EE_h^\Gamma$.
We stress that the triangulation $\EE_h^\Gamma$ of the boundary $\Gamma$ is, in
general, independent of the triangulation $\TT_h$.

Usually, one uses the space $\PP^p(\EE_h^\Gamma) := \{ v\in L^2(\Gamma) \,:\,
v|_E\text{ is a polynomial of degree } \leq p \text{ for all } E\in\EE_h^\Gamma
\}$ to approximate functions $\phi\in H^{-1/2}(\Gamma)$ and the space
$\SS^q(\TT_h) := \PP^q(\TT_h) \cap C(\overline\Omega)$ to approximate functions
$u\in H^1(\Omega)$, with $q=p+1$. Here, $\PP^q(\TT_h) := \{ v\in L^2(\Omega)
\,:\, v|_T \text{ is a polynomial of degree } \leq q\}$.
In Sections~\ref{sec:sym}--\ref{sec:bmc}, we may therefore use the space $\HH_h
:= \XX_h\times\YY_h = \big(\SS^q(\TT_h)\big)^d\times\big(\PP^p(\EE_h^\Gamma)\big)^d$ to approximate
functions $(\uu,\pphi)\in\HH := \H^1(\Omega)\times\H^{-1/2}(\Gamma)$.

\section{Symmetric FEM-BEM coupling}\label{sec:sym}
\noindent
The symmetric coupling of FEM and BEM has independently been introduced by Costabel
and Han, see~\cite{costabel,han90} for example. It relies on the use of all boundary integral
operators from the Cald\'eron projector.
For the derivation of the variational formulation of the symmetric coupling,
cf.~\eqref{eq:sym:weakform}, we refer to e.g.~\cite{cfs,coststeph90,gh95} for nonlinear
elasticity problems and to e.g.~\cite{affkmp,cs1995,gh95} for nonlinear Laplace
problems.
It is also shown in~\cite{coststeph90} resp. in~\cite{cfs} for the two-dimensional
case that the symmetric coupling~\eqref{eq:sym:weakform} is equivalent to the
model problem~\eqref{eq:modelproblem}.

\subsection{Variational formulation}\label{sec:sym:weakform}
The symmetric coupling reads as follows: Find
$(\uu,\pphi)\in\HH:=\H^1(\Omega)\times\H^{-1/2}(\Gamma)$, such that
\begin{subequations}\label{eq:sym:weakform}
\begin{align}
  \dual{\mon\strain(\uu)}{\strain(\vv)}_\Omega + \dual{\hyp\uu}\vv_\Gamma +
  \dual{(\dlp'-\tfrac12)\pphi}\vv_\Gamma &= \dual\ff\vv_\Omega +\dual{\pphi_0 +
  \hyp\uu_0}\vv_\Gamma, \\
  \dual\ppsi{(\tfrac12-\dlp)\uu + \slp\pphi}_\Gamma &=
  \dual\ppsi{(\tfrac12-\dlp)\uu_0}_\Gamma
  \label{eq2:sym:weakform}
\end{align}
\end{subequations}
holds for all $(\vv,\ppsi)\in\HH$.

To abbreviate notation, we define the mapping $b: \HH\times\HH\to\R$ and the
continuous linear functional $\FF\in\HH^*$ by 
\begin{align}\label{eq:sym:defbb}
\begin{split}
  b( (\uu,\pphi),(\vv,\ppsi)) &:= \dual{\mon\strain(\uu)}{\strain(\vv)}_\Omega \\
  &\qquad+ \dual{\hyp\uu}\vv_\Gamma +
  \dual{(\dlp'-\tfrac12)\pphi}\vv_\Gamma  + 
  \dual\ppsi{(\tfrac12-\dlp)\uu + \slp\pphi}_\Gamma
\end{split}
\end{align}
and
\begin{align}\label{eq:sym:defFF}
  \FF( \vv,\ppsi) := \dual\ff\vv_\Omega +\dual{\pphi_0 +
  \hyp\uu_0}\vv_\Gamma + \dual\ppsi{(\tfrac12-\dlp)\uu_0}_\Gamma
\end{align}
for all $(\uu,\pphi),(\vv,\ppsi)\in\HH$. 
Then, the symmetric coupling~\eqref{eq:sym:weakform} can also be written
as follows: Find $(\uu,\pphi)\in\HH$ such that
\begin{align}\label{eq:sym:weakform2}
  b( (\uu,\pphi),(\vv,\ppsi)) = \FF(\vv,\ppsi) \quad\text{holds for all } (\vv,\ppsi)\in\HH. 
\end{align}
Note that $b(\cdot,\cdot)$ is nonlinear in $\uu$ only, but linear in
$\vv,\ppsi$, and $\pphi$.
If we plug in the functions $(\uu,\pphi)=(\vv,\ppsi) = (\rr,0)$ with
$\rr\in\RR_d$ into~\eqref{eq:sym:defbb}, we observe
\begin{align}\label{eq:sym:notmonotone}
  b( (\rr,0),(\rr,0)) = 0.
\end{align}
Therefore, $b(\cdot,\cdot)$ is not elliptic and unique solvability of
~\eqref{eq:sym:weakform2} cannot be shown directly. In the following sections, we introduce an equivalent
formulation of~\eqref{eq:sym:weakform2} which even has the same solution.
Since this equivalent formulation turns out to be uniquely solvable,
also~\eqref{eq:sym:weakform2} admits a unique solution.

The following two theorems are the main results of this section.
With an additional assumption on the model parameters $\c{monA},\lext,\mext$
these results also hold true for other coupling methods, namely the
Johnson-N\'ed\'elec coupling, cf. Section~\ref{sec:jn}, and the Bielak-MacCamy
coupling, cf. Section~\ref{sec:bmc}.
\begin{theorem}\label{thm:solvability}
Let $\HH_h:=\XX_h\times\YY_h$ be a closed subspace of $\HH$ and assume that
$\YY_0\subseteq\YY_h \cap \LL^2(\Gamma)$ satisfies 
\begin{align}\label{eq:assonYYh}
  \forall \rr\in\RR_d\backslash\{\bignull\}  \exists \xxi\in\YY_0 \quad 
  \dual{\xxi}\rr_\Gamma \neq 0.
\end{align}
Then, the symmetric coupling
\begin{align}\label{eq:sym:weakform4}
  b( (\uu,\pphi), (\vv,\ppsi)) = \FF(\vv,\ppsi)
  \quad\text{for all } (\vv,\ppsi)\in\HH
\end{align}
as well as its Galerkin formulation
\begin{align}\label{eq:sym:weakform:disc}
  b( (\uu_h,\pphi_h), (\vv_h,\ppsi_h)) = \FF(\vv_h,\ppsi_h)
  \quad\text{for all } (\vv_h,\ppsi_h)\in\HH_h
\end{align}
admit unique solutions $(\uu,\pphi)\in\HH$ resp. $(\uu_h,\pphi_h)\in\HH_h$.
Moreover, there holds the C\'ea-type quasi-optimality
\begin{align}\label{eq:sym:cea}
  \norm{(\uu,\pphi)-(\uu_h,\pphi_h)}\HH \leq \c{cea}
  \min_{(\vv_h,\ppsi_h)\in\HH_h} \norm{(\uu,\pphi)-(\vv_h,\ppsi_h)}\HH.
\end{align}
The constant $\c{cea}>0$ depends only on $\Omega$, $\mon$, $\YY_0$, and on the
Lam\'e constants $\lext,\mext$.
\end{theorem}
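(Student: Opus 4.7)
The proof rests on the \emph{implicit stabilization} technique of~\cite{affkmp}, adapted to the elasticity setting. Since $b$ already satisfies~\eqref{eq:sym:notmonotone}, outright monotonicity on $\HH$ is impossible; the plan is to construct an auxiliary bilinear form $\widetilde b := b + s$ and an auxiliary functional $\widetilde\FF := \FF + \ell$ with two properties: (i) $\widetilde b$ induces a strongly monotone and Lipschitz continuous operator on $\HH$, so that Zarantonello's theorem yields a unique solution $(\uu,\pphi)$ to $\widetilde b((\uu,\pphi),(\vv,\ppsi)) = \widetilde\FF(\vv,\ppsi)$; and (ii) the stabilized and the original problems have identical solution sets. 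The very same construction applies to the closed subspace $\HH_h$, because~\eqref{eq:assonYYh} places $\YY_0$ inside $\YY_h$. The quasi-optimality~\eqref{eq:sym:cea} is then the standard C\'ea estimate for strongly monotone, Lipschitz problems.

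The key \emph{partial coercivity} identity is
\begin{equation*}
  b((\uu,\pphi),(\uu,\pphi)) = \dual{\mon\strain(\uu)}{\strain(\uu)}_\Omega + \dual{\hyp\uu}{\uu}_\Gamma + \dual{\pphi}{\slp\pphi}_\Gamma,
\end{equation*}
obtained because the two $\dlp$-containing cross terms in~\eqref{eq:sym:defbb} cancel. Passing to the incremental form, this together with~\eqref{eq:monA}, $\hyp\geq 0$, and $\slp$-ellipticity controls $\norm{\strain(\uu-\vv)}{\LL^2(\Omega)}^2 + \norm{\pphi-\ppsi}{\H^{-1/2}(\Gamma)}^2$; the only missing contribution is the $\RR_d$-component of $\uu-\vv$. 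Invoking~\eqref{eq:assonYYh} and $\dim\RR_d<\infty$, a standard finite-dimensional linear-algebra argument produces families $\{\xxi^k\}_{k=1}^{\dim\RR_d}\subset\YY_0$ and $\{\rr^k\}\subset\RR_d$ with $\dual{\xxi^k}{\rr^l}_\Gamma=\delta_{kl}$. Setting
\begin{equation*}
  s((\uu,\pphi),(\vv,\ppsi)) := \sum_{k=1}^{\dim\RR_d} \dual{\xxi^k}{\uu}_\Gamma \dual{\xxi^k}{\vv}_\Gamma
\end{equation*}
and using the second Korn inequality together with biorthogonality restores the missing control, yielding strong monotonicity of $\widetilde b$ on all of $\HH$ and on $\HH_h$.

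The hard part is the \emph{equivalence} of the stabilized and the original formulations. The linear perturbation $\ell$ must be chosen so that, for every solution $(\uu,\pphi)$ of~\eqref{eq:sym:weakform4}, the identity $s((\uu,\pphi),(\vv,\ppsi)) = \ell(\vv,\ppsi)$ holds for every admissible test pair $(\vv,\ppsi)$. To this end, test~\eqref{eq2:sym:weakform} with $\ppsi=\xxi^k\in\YY_0\cap\LL^2(\Gamma)$ and exploit $\dlp\rr^l=-\tfrac12\rr^l$ together with biorthogonality to obtain a closed-form expression for $\dual{\xxi^k}{\uu}_\Gamma$ depending only on $\dlp\uu$, $\slp\pphi$, and the data $\uu_0$; inserting this expression into $\ell$ produces the required equivalence. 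Lipschitz continuity of $\widetilde b$ follows from~\eqref{eq:lipA} and boundedness of $\slp,\dlp,\dlp',\hyp$, and the dependence of $\c{cea}$ on $\Omega,\mon,\YY_0,\lext,\mext$ traces back to the constants in Korn's inequality, in the stabilization $s$, and in $\slp$-ellipticity.
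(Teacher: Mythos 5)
Your overall strategy---implicit stabilization, strong monotonicity of the stabilized form via an equivalent norm, the theorem on strongly monotone operators, and C\'ea---is exactly the paper's, and your partial coercivity identity, the Lipschitz step, and the Korn-based norm equivalence are sound. The gap is in the concrete choice of the stabilization term. You set $s((\uu,\pphi),(\vv,\ppsi))=\sum_k\dual{\xxi^k}{\uu}_\Gamma\dual{\xxi^k}{\vv}_\Gamma$ and then need a \emph{data-dependent} functional $\ell$ such that $s((\uu,\pphi),\cdot)=\ell(\cdot)$ holds at every solution of the original problem. But testing~\eqref{eq2:sym:weakform} with $\ppsi=\xxi^k$ only gives $\dual{\xxi^k}{\uu}_\Gamma = 2\dual{\xxi^k}{\dlp\uu-\slp\pphi}_\Gamma+2\dual{\xxi^k}{(\tfrac12-\dlp)\uu_0}_\Gamma$, and $\dlp\uu$, $\slp\pphi$ are \emph{not} data: they involve the unknown solution. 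The value $\dual{\xxi^k}{\uu}_\Gamma$ at the solution is simply not determined by $\ff,\uu_0,\pphi_0$ alone, so the $\ell$ you describe depends on $(\uu,\pphi)$, the ``stabilized problem'' is not a fixed operator equation, and the transfer of well-posedness back to the original problem collapses. The repair is to keep the solution-dependent terms on the left, i.e.\ to stabilize with the full residual functional $g_k(\uu,\pphi):=\dual{\xxi^k}{(\tfrac12-\dlp)\uu+\slp\pphi}_\Gamma$, whose value at any solution is the known number $\dual{\xxi^k}{(\tfrac12-\dlp)\uu_0}_\Gamma$ and which still detects rigid body motions because $\dlp\rr=-\tfrac12\rr$ yields $g_k(\rr,\bignull)=\dual{\xxi^k}{\rr}_\Gamma$; this is precisely the construction in Propositions~\ref{prop:equivalence} and~\ref{prop:richenough}.

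A second, related omission: you only argue that a solution of the original problem solves the stabilized one. The converse implication is also required (otherwise uniqueness for the stabilized problem says nothing about uniqueness for the original one), and with the symmetric stabilization $\sum_k g_k(\uu,\pphi)\,g_k(\vv,\ppsi)$ it is not automatic: the paper tests with $(\bignull,\xxi^\ell)$ and uses positive definiteness of the Gram matrix $\dual{\xxi^k}{\slp\xxi^j}_\Gamma$ to show that the residuals $g_k(\uu,\pphi)-\dual{\xxi^k}{(\tfrac12-\dlp)\uu_0}_\Gamma$ vanish, whence the stabilization cancels against $\widetilde\FF-\FF$. Your biorthogonal normalization $\dual{\xxi^k}{\rr^l}_\Gamma=\delta_{kl}$ is legitimate but unnecessary; linear independence of the $\LL^2$-projections $\Pi_0\rr^j$ suffices.
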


Assumption~\eqref{eq:assonYYh} is clearly satisfied if
$\YY_0:=\big(\PP^1(\EE_h^\Gamma)\big)^d$
denotes the space of affine functions restricted to $\EE_h^\Gamma$,
since $\RR_d\subseteq\big(\PP^1(\EE_h^\Gamma)\big)^d$ and one may thus choose $\xxi=\rr$ in~\eqref{eq:assonYYh}.
However, we shall also show that the space
$\YY_0 := \big(\PP^0(\EE_h^\Gamma)\big)^d$ is sufficiently rich to
ensure~\eqref{eq:assonYYh}. This is precisely the second theorem
we aim to emphasize and prove.
Note that the constant $\c{cea}$ does not depend on the mesh-size $h$ if $\YY_0
\subseteq \YY_h$ for all $h$.

\begin{theorem}\label{thm:assumption}
For $d=2,3$, the space $\YY_0 := \big(\PP^0(\EE_h^\Gamma)\big)^d$ satisfies 
assumption~\eqref{eq:assonYYh}.
\end{theorem}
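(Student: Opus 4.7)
The plan is to dualize~\eqref{eq:assonYYh} and reduce it to a geometric statement about element centroids. Since $\YY_0=(\PP^0(\EE_h^\Gamma))^d$ consists of vector fields that are constant on each $E\in\EE_h^\Gamma$, testing with $\xxi=\ee^j\chi_E$ for $j=1,\dots,d$ and for each individual $E$ shows that $\dual{\xxi}\rr_\Gamma=0$ holds for all $\xxi\in\YY_0$ if and only if $\int_E\rr\,d\Gamma=\bignull$ for every $E\in\EE_h^\Gamma$. Thus I aim to prove: the only $\rr\in\RR_d$ with $\int_E\rr\,d\Gamma=\bignull$ for all $E\in\EE_h^\Gamma$ is $\rr=\bignull$.

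Because every $\rr\in\RR_d$ is an affine function of $\xx$ and each $E\in\EE_h^\Gamma$ is planar ($\Gamma$ being polygonal resp.\ polyhedral), the mean value equals the value at the centroid: $\int_E\rr\,d\Gamma=|E|\,\rr(\bar\xx^E)$ with $\bar\xx^E$ the centroid of $E$. Writing $\rr(\xx)=\aa+\boldsymbol{M}\xx$ with $\boldsymbol{M}$ skew-symmetric, the identities $\rr(\bar\xx^E)-\rr(\bar\xx^{E'})=\boldsymbol{M}(\bar\xx^E-\bar\xx^{E'})$ show that, provided the centroid set $\{\bar\xx^E\}_{E\in\EE_h^\Gamma}$ affinely spans $\R^d$ (equivalently, is contained in no hyperplane), the vanishing $\rr(\bar\xx^E)=\bignull$ for all $E$ forces first $\boldsymbol{M}=\bignull$ and then $\aa=\bignull$. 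Hence the theorem reduces to verifying this affine-span property of the centroids, uniformly in $d=2,3$.

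The main obstacle is exactly this geometric affine-span statement: for a possibly non-convex connected Lipschitz polyhedral domain one would prefer to avoid ad-hoc combinatorial adjacency arguments. I plan to settle it by a short divergence-theorem computation that works in both dimensions at once. Assume for contradiction that all centroids lie on a hyperplane $\{\xx:\boldsymbol\nu\cdot\xx=c\}$ for some unit $\boldsymbol\nu$ and some $c\in\R$, and define the affine scalar function $g(\xx):=\boldsymbol\nu\cdot\xx-c$. Planarity of each $E$ yields $\int_E g\,d\Gamma=|E|\,g(\bar\xx^E)=0$, and since $\normal$ is constant on each planar $E$,
\[ \int_\Gamma g\,(\boldsymbol\nu\cdot\normal)\,d\Gamma=\sum_{E\in\EE_h^\Gamma}(\boldsymbol\nu\cdot\normal|_E)\int_E g\,d\Gamma=0. \]
On the other hand, applying the divergence theorem to the vector field $g\boldsymbol\nu$ gives
\[ \int_\Gamma g\,(\boldsymbol\nu\cdot\normal)\,d\Gamma=\int_\Omega\div(g\boldsymbol\nu)\,d\xx=\int_\Omega|\boldsymbol\nu|^2\,d\xx=|\Omega|>0, \]
a contradiction. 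Consequently the centroids affinely span $\R^d$, which by the reduction above finishes the proof in both $d=2$ and $d=3$.
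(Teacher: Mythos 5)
Your proof is correct, but it takes a genuinely different route from the paper's. The paper reduces assumption~\eqref{eq:assonYYh} to the linear independence of the projections $\Pi_0(\rr^j)$ of an explicit basis of $\RR_d$; for $d=2$ this is a short computation with midpoints, but for $d=3$ it requires the combinatorial Lemma~\ref{lemma:geom3d} (three element centroids are not collinear, proven by analyzing the fan of triangles around a vertex) followed by a case analysis on the resulting $6\times 6$ homogeneous system. You instead dualize the assumption to the statement that no nonzero $\rr=\aa+\boldsymbol{M}\xx\in\RR_d$ (with $\boldsymbol{M}$ skew-symmetric) can vanish at every element centroid, reduce this to the claim that the centroids affinely span $\R^d$, and dispatch that claim with a single divergence-theorem identity $\int_\Gamma g\,(\boldsymbol\nu\cdot\normal)\,d\Gamma=|\Omega|>0$ for the affine function $g$ cutting out the putative hyperplane. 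All steps check out: the test functions $\ee^j\chi_E$ do exhaust $\YY_0$ in the relevant sense, the mean of an affine function over $E$ is its value at the centroid, the constancy of $\normal$ on flat facets justifies pulling $\boldsymbol\nu\cdot\normal|_E$ out of the integral, and full affine span forces $\boldsymbol{M}=\bignull$ and then $\aa=\bignull$. What your approach buys is uniformity in the dimension (no separate $d=2$ and $d=3$ arguments), complete avoidance of the appendix lemma and the case distinctions, and in fact a proof valid for any $d$ and any partition of a closed polyhedral boundary into flat facets; you even establish a stronger geometric fact (full affine span) than the paper needs (three non-collinear centroids). What the paper's argument buys in exchange is that it is entirely elementary linear algebra and constructive, exhibiting the specific elements $A,B,C$ that witness the non-degeneracy, at the price of considerably more bookkeeping.
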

\noindent
The proof of Theorem~\ref{thm:solvability} resp.\ Theorem~\ref{thm:assumption}
is carried out in Section~\ref{sec:proof:solvability} resp.\
Section~\ref{sec:proof:assumption}.

\subsection{Implicit theoretical stabilization}\label{sec:sym:stab}
To prove Theorem~\ref{thm:solvability}, we shall add appropriate terms to the
linear form $b(\cdot,\cdot)$, which tackle the rigid body motions in the interior domain
$\Omega$.
These (purely theoretical) stabilization terms are chosen in such a way that they vanish when
inserting a (continuous resp.\ discrete) solution of~\eqref{eq:sym:weakform4}.
To be more precise, we will use~\eqref{eq2:sym:weakform} to
stabilize the linear form $b(\cdot,\cdot)$. 

\begin{proposition}\label{prop:equivalence}
Let $\HH_h = \XX_h\times\YY_h$ be a closed subspace of $\HH$.
Let $\{(\xxi^j)_{j=1}^D\}\subseteq\YY_h$, $D\in\N$, be a set of linearly independent functions.
  Define
  \begin{align}
    \widetilde b( (\uu,\pphi),(\vv,\ppsi) ) :=  b( (\uu,\pphi),(\vv,\ppsi) ) + 
    \sum_{j=1}^D \dual{\xxi^j}{(\tfrac12\!-\!\dlp)\uu \!+\! \slp\pphi}_\Gamma
    \dual{\xxi^j}{(\tfrac12\!-\!\dlp)\vv \!+\! \slp\ppsi}_\Gamma
  \end{align}
  for all $(\uu,\pphi),(\vv,\ppsi)\in\HH_h$ and
  \begin{align} 
    \widetilde\FF(\vv,\ppsi) := \FF(\vv,\ppsi) + \sum_{j=1}^D
    \dual{\xxi^j}{(\tfrac12-\dlp)\uu_0}
    \dual{\xxi^j}{(\tfrac12-\dlp)\vv+\slp\ppsi}_\Gamma
  \end{align}
  for all $(\vv,\ppsi)\in\HH_h$.
  Then, there holds the following equivalence: A function $(\uu,\pphi)\in\HH_h$ solves
  \begin{align}\label{eq:sym:weakform3}
    b( (\uu,\pphi),(\vv,\ppsi) ) = \FF(\vv,\ppsi) \quad\text{for all } (\vv,\ppsi)\in\HH_h 
  \end{align}
  if and only if it also solves
  \begin{align}\label{eq:sym:weakformmod}
    \widetilde b( (\uu,\pphi),(\vv,\ppsi) ) = \widetilde\FF(\vv,\ppsi)
    \quad\text{for all }(\vv,\ppsi)\in\HH_h.
  \end{align}
\end{proposition}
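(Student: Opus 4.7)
The plan is to prove both implications of the equivalence by exploiting the very specific structure of the added stabilization terms: on both sides of \eqref{eq:sym:weakformmod} they share the common test-dependent factor $\dual{\xxi^j}{(\tfrac12\!-\!\dlp)\vv\!+\!\slp\ppsi}_\Gamma$, so equivalence reduces to matching the other factors $\dual{\xxi^j}{(\tfrac12\!-\!\dlp)\uu\!+\!\slp\pphi}_\Gamma$ and $\dual{\xxi^j}{(\tfrac12\!-\!\dlp)\uu_0}_\Gamma$.

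For the direction $\eqref{eq:sym:weakform3}\Rightarrow\eqref{eq:sym:weakformmod}$, since $\xxi^j\in\YY_h$, the pair $(\bignull,\xxi^j)$ is an admissible test function in \eqref{eq:sym:weakform3}. Plugging it into the definitions~\eqref{eq:sym:defbb} and~\eqref{eq:sym:defFF} collapses $b$ and $\FF$ to the boundary terms and yields
\begin{align*}
\dual{\xxi^j}{(\tfrac12-\dlp)\uu+\slp\pphi}_\Gamma=\dual{\xxi^j}{(\tfrac12-\dlp)\uu_0}_\Gamma\quad(j=1,\ldots,D).
\end{align*}
Substituting this identity factor-by-factor into the stabilization term of $\widetilde b$ identifies it with the stabilization term of $\widetilde\FF$, and~\eqref{eq:sym:weakformmod} follows by adding the common quantity to both sides of~\eqref{eq:sym:weakform3}.

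For the converse $\eqref{eq:sym:weakformmod}\Rightarrow\eqref{eq:sym:weakform3}$, define the residuals
\begin{align*}
r_k:=\dual{\xxi^k}{(\tfrac12-\dlp)(\uu-\uu_0)+\slp\pphi}_\Gamma,\quad k=1,\ldots,D.
\end{align*}
Testing~\eqref{eq:sym:weakformmod} with $(\vv,\ppsi)=(\bignull,\xxi^k)\in\HH_h$, the $b$-part reduces (as above) to $\dual{\xxi^k}{(\tfrac12-\dlp)\uu+\slp\pphi}_\Gamma$, the $\FF$-part to $\dual{\xxi^k}{(\tfrac12-\dlp)\uu_0}_\Gamma$, and the factor in the stabilization terms becomes $\dual{\xxi^j}{\slp\xxi^k}_\Gamma$. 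Rearranging gives the linear system
\begin{align*}
r_k+\sum_{j=1}^{D}\dual{\xxi^j}{\slp\xxi^k}_\Gamma\, r_j=0,\quad k=1,\ldots,D,
\end{align*}
i.e.\ $(I+S)\mathbf r=\bignull$ where $S_{kj}:=\dual{\xxi^j}{\slp\xxi^k}_\Gamma$. By the ellipticity and symmetry of $\slp$ recalled in Section~\ref{sec:intop}, $S$ is symmetric positive semidefinite, so $I+S$ is positive definite, hence invertible; therefore $r_k=0$ for all $k$. Inserting these vanishing residuals back into~\eqref{eq:sym:weakformmod} kills the stabilization terms and recovers~\eqref{eq:sym:weakform3} for every test function $(\vv,\ppsi)\in\HH_h$.

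The main (and really only) obstacle is the inversion argument in the converse direction; everything else is a purely algebraic cancellation. That obstacle is cleanly resolved by the $\slp$-ellipticity already at our disposal, so no further assumption on $\{\xxi^j\}$ (in particular, their linear independence, which the proposition nonetheless records for later use) is needed for the equivalence itself.
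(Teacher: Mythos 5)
Your proof is correct and follows essentially the same route as the paper: test with $(\bignull,\xxi^j)$ in both directions, and in the converse direction reduce to the linear system for the residuals, which the paper writes as $\boldsymbol A\xx=-\xx$ with $\boldsymbol A_{jk}=\dual{\xxi^k}{\slp\xxi^j}_\Gamma$ and resolves via positive definiteness of $\boldsymbol A$ (using linear independence of the $\xxi^j$). Your only deviation is the accurate observation that positive semidefiniteness of $S$ already makes $I+S$ invertible, so the linear independence hypothesis is not actually needed for the equivalence itself --- a marginally sharper reading of the same argument.
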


\begin{proof}
  \textbf{Step~1. }  Assume that $(\uu,\pphi)\in\HH_h$
  solves~\eqref{eq:sym:weakform3}. Firstly, by inserting the test-function
  $(\bignull,\xxi^j)$ in~\eqref{eq:sym:weakform3}, we get
  \begin{align*} 
    \dual{\xxi^j}{(\tfrac12-\dlp)\uu + \slp\pphi}_\Gamma 
    =b( (\uu,\pphi),(\bignull,\xxi^j)) = \FF(\bignull,\xxi^j) =
    \dual{\xxi^j}{(\tfrac12-\dlp)\uu_0}_\Gamma    
  \end{align*}
  for all $j=1,\dots,D$. 
  Secondly, we multiply this equation with
  $\dual{\xxi^j}{(\tfrac12-\dlp)\vv+\slp\ppsi}_\Gamma$ and infer
  \begin{align*}
    \dual{\xxi^j}{(\tfrac12-\dlp)\uu +\slp\pphi}_\Gamma 
    \dual{\xxi^j}{(\tfrac12-\dlp)\vv+\slp\ppsi}_\Gamma 
    =  \dual{\xxi^j}{(\tfrac12-\dlp)\uu_0}_\Gamma 
     \dual{\xxi^j}{(\tfrac12-\dlp)\vv+\slp\ppsi}_\Gamma 
   \end{align*}
  Last, we sum up these terms over all $j=1,\dots,D$
  and add the sum to~\eqref{eq:sym:weakform3} to see that
  $(\uu,\pphi)$ solves~\eqref{eq:sym:weakformmod}.
  \\
  \textbf{Step~2. } Assume that $(\uu,\pphi)\in\HH_h$
  solves~\eqref{eq:sym:weakformmod}. By choosing $(\vv,\ppsi) =
  (\bignull,\xxi^\ell)$ as a test-function in~\eqref{eq:sym:weakformmod}, we infer
  \begin{align*} 
    &\dual{\xxi^\ell}{(\tfrac12-\dlp)\uu+\slp\pphi}_\Gamma +\sum_{j=1}^D
    \dual{\xxi^j}{(\tfrac12-\dlp)\uu+\slp\pphi}_\Gamma
    \dual{\xxi^j}{\slp\xxi^\ell}_\Gamma
     = \widetilde b( (\uu,\pphi),(\bignull,\xxi^\ell))
    \\&\qquad
    = \widetilde \FF(\bignull,\xxi^\ell) 
     = \dual{\xxi^\ell}{(\tfrac12-\dlp)\uu_0}_\Gamma +\sum_{j=1}^D
    \dual{\xxi^j}{(\tfrac12-\dlp)\uu_0}_\Gamma
    \dual{\xxi^j}{\slp\xxi^\ell}_\Gamma,
  \end{align*}
  which is equivalent to
  \begin{align}\label{eq:sym:modproof} 
    \sum_{j=1}^D \dual{\xxi^j}{(\tfrac12-\dlp)(\uu-\uu_0)+\slp\pphi}_\Gamma
    \dual{\xxi^j}{\slp\xxi^\ell}_\Gamma = -
    \dual{\xxi^\ell}{(\tfrac12-\dlp)(\uu-\uu_0)+\slp\pphi}_\Gamma
  \end{align}
  for all $\ell=1,\dots,D$.
  Next, we define a matrix $\boldsymbol A\in\R_{\rm sym}^{D\times D}$ with entries
  ${\boldsymbol A}_{jk} :=
  \dual{\xxi^k}{\slp\xxi^j}_\Gamma$ and a vector $\xx\in\R^D$ with entries
  $\xx_k:=\dual{\xxi^k}{(\tfrac12-\dlp)(\uu-\uu_0)+\slp\pphi}_\Gamma$ for
  all $j,k=1,\dots,d$.
  Then, we can rewrite~\eqref{eq:sym:modproof} for all $\ell=1,\dots,D$ simultaneously as 
  \begin{align}\label{eq:sym:modproof2}
    {\boldsymbol A}\xx = -\xx.
  \end{align}
  Since $\slp$ is elliptic, and $(\xxi^j)_{j=1}^D$ are linearly independent, the
  matrix $\boldsymbol A$ is
  positive definite and thus only has positive eigenvalues.
  Therefore,~\eqref{eq:sym:modproof2} is equivalent to $\xx=\bignull$,
  which means 
  \begin{align*}
    \dual{\xxi^j}{(\tfrac12-\dlp)\uu+\slp\pphi}_\Gamma =
    \dual{\xxi^j}{(\tfrac12-\dlp)\uu_0}_\Gamma
  \end{align*}
  for all $j=1,\dots,D$.
  With these equalities and the definitions of $b(\cdot,\cdot)$ and $\widetilde
  b(\cdot,\cdot)$, we get
  \begin{align*}
    \widetilde b( (\uu,\pphi),(\vv,\ppsi)) - b( (\uu,\pphi),(\vv,\ppsi)) &=
    \sum_{j=1}^D \dual{\xxi^j}{(\tfrac12-\dlp)\uu+\slp\pphi}_\Gamma
    \dual{\xxi^j}{(\tfrac12-\dlp)\vv+\slp\ppsi}_\Gamma \\
    &= \sum_{j=1}^D \dual{\xxi^j}{(\tfrac12-\dlp)\uu_0}_\Gamma
    \dual{\xxi^j}{(\tfrac12-\dlp)\vv+\slp\ppsi}_\Gamma \\
    &= \widetilde\FF(\vv,\ppsi) - \FF(\vv,\ppsi).
  \end{align*}
  In particular,~\eqref{eq:sym:weakformmod} thus
  implies~\eqref{eq:sym:weakform3}. This concludes the proof.
\end{proof}

\subsection{Equivalent norm}\label{sec:sym:norm}
To show that the equivalent bilinear form of Proposition~\ref{prop:equivalence} 
is, in fact, stabilized and yields a strongly elliptic formulation, we will
prove that the employed stabilization term provides an
equivalent norm on the energy space $\HH$.

\begin{lemma}\label{lemma:equivnorm}
  Let $g_j : \HH\to\R$ with $j=1,\dots,D$ denote linear and continuous functionals such that
  \begin{align}\label{eq:equivnorm1}
    |g(\rr,\bignull)|^2 := \sum_{j=1}^D g_j(\rr,\bignull)^2 \neq 0
    \quad\text{holds for all }\rr\in\RR_d\backslash\{\bignull\}.
  \end{align}
  Then, the definition
   \begin{align}
    \enorm{(\uu,\pphi)}^2 := \norm{\strain(\uu)}{L^2(\Omega)}^2 +
    \dual{\pphi}{\slp\pphi}_\Gamma + |g(\uu,\pphi)|^2
   \quad\text{for all }(\uu,\pphi)\in\HH
  \end{align}
  yields an equivalent norm on $\HH$, and the
  norm equivalence constant $\setc{norm}>0$ in
  \begin{align}
   \c{norm}^{-1}\, \norm{(\uu,\pphi)}{\HH}
   \le \enorm{(\uu,\pphi)}
   \le \c{norm}\, \norm{(\uu,\pphi)}{\HH}
   \quad\text{for all }(\uu,\pphi)\in\HH
  \end{align}
   depends only on $\Omega$, $\lext$, $\mext$, and $g$.
\end{lemma}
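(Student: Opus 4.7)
The plan is to prove the two inequalities separately, with essentially all of the work concentrated in the lower bound $\norm{(\uu,\pphi)}{\HH} \le \c{norm}\,\enorm{(\uu,\pphi)}$.

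For the upper bound $\enorm{(\uu,\pphi)} \le \c{norm}\,\norm{(\uu,\pphi)}{\HH}$ I would simply combine the trivial estimate $\norm{\strain(\uu)}{\LL^2(\Omega)} \le \norm{\uu}{\H^1(\Omega)}$ with the continuity $\slp \in L(\H^{-1/2}(\Gamma);\H^{1/2}(\Gamma))$, which gives $\dual{\pphi}{\slp\pphi}_\Gamma \le C\,\norm{\pphi}{\H^{-1/2}(\Gamma)}^2$, and the continuity of each $g_j$ on $\HH$, which yields $|g(\uu,\pphi)|^2 \le C\,\norm{(\uu,\pphi)}{\HH}^2$. Summation gives the bound, and the constant depends only on $\Omega$, $\lext$, $\mext$, and $g$.

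For the lower bound, the boundary component is handled by the ellipticity of $\slp$ on $\H^{-1/2}(\Gamma)$ (recorded in the excerpt right after the introduction of the integral operators), which gives $\norm{\pphi}{\H^{-1/2}(\Gamma)}^2 \le C\,\dual{\pphi}{\slp\pphi}_\Gamma \le C\,\enorm{(\uu,\pphi)}^2$. The main step is to control $\norm{\uu}{\H^1(\Omega)}$. I would decompose $\uu = \rr + \uu^\perp$, where $\rr$ is the $\LL^2(\Omega)$-orthogonal projection of $\uu$ onto the finite-dimensional subspace $\RR_d$. Since $\strain(\rr) = \bignull$, the second Korn inequality combined with the Poincar\'e-type estimate $\norm{\ww}{\LL^2(\Omega)} \le C\,\norm{\strain(\ww)}{\LL^2(\Omega)}$ valid for $\ww\in\H^1(\Omega)$ orthogonal to $\RR_d$ yields $\norm{\uu^\perp}{\H^1(\Omega)} \le C\,\norm{\strain(\uu)}{\LL^2(\Omega)} \le C\,\enorm{(\uu,\pphi)}$, where the constant depends only on $\Omega$.

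To bound $\norm{\rr}{\H^1(\Omega)}$ I would exploit the hypothesis~\eqref{eq:equivnorm1}: the map $\ww \mapsto (g_1(\ww,\bignull),\ldots,g_D(\ww,\bignull))$ is linear from $\RR_d$ into $\R^D$ and, by assumption, injective. Hence $\ww \mapsto |g(\ww,\bignull)|$ is a norm on the finite-dimensional space $\RR_d$, and equivalence of norms there gives $\norm{\rr}{\H^1(\Omega)} \le C\,|g(\rr,\bignull)|$ with a constant depending only on $g$ and $\RR_d$. By linearity, $g(\rr,\bignull) = g(\uu,\pphi) - g(\uu^\perp,\pphi)$, so continuity of the $g_j$'s plus the two bounds already established on $\uu^\perp$ and $\pphi$ yields $|g(\rr,\bignull)| \le C\,\enorm{(\uu,\pphi)}$. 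Adding $\norm{\uu^\perp}{\H^1(\Omega)}$ and $\norm{\rr}{\H^1(\Omega)}$ completes the lower bound.

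The main obstacle will be the bookkeeping of constants: I must ensure that each appearance of $C$ depends only on the allowed quantities. Korn's second inequality and the Poincar\'e-Korn estimate on the orthogonal complement contribute an $\Omega$-constant; the ellipticity of $\slp$ contributes a constant depending on $\Omega,\lext,\mext$; and the finite-dimensional equivalence of norms depends on $g$ and $\RR_d$. No dependence on $\uu$, $\pphi$, or the nonlinear operator $\mon$ enters, which is essential for the subsequent Galerkin and C\'ea-type estimates in Theorem~\ref{thm:solvability}.
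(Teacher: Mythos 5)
Your proof is correct, but it takes a genuinely different route from the paper. The paper argues by contradiction and compactness: it assumes a normalized sequence $(\vv_n,\ppsi_n)$ with $\norm{(\vv_n,\ppsi_n)}{\HH}=1$ and $\enorm{(\vv_n,\ppsi_n)}<1/n$, extracts a weakly convergent subsequence, uses the Rellich theorem and weak lower semi-continuity to identify the weak limit as an element $\vv\in\ker(\strain)=\RR_d$ annihilated by $g$, hence $\vv=\bignull$ by~\eqref{eq:equivnorm1}, and then upgrades to strong convergence via Korn's second inequality to contradict the normalization. You instead give a direct, quantitative argument: project $\uu$ onto $\RR_d$ in $\LL^2(\Omega)$, control the complement $\uu^\perp$ by the Poincar\'e--Korn inequality on the $\LL^2$-orthogonal complement of $\RR_d$ (using $\strain(\uu^\perp)=\strain(\uu)$), and control the rigid-body part $\rr$ by the equivalence of $|g(\cdot,\bignull)|$ with any norm on the finite-dimensional space $\RR_d$, which is exactly where~\eqref{eq:equivnorm1} enters; the identity $g(\rr,\bignull)=g(\uu,\pphi)-g(\uu^\perp,\pphi)$ then closes the loop. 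Both arguments are sound. Your version makes the structure of the constant $\c{norm}$ more transparent (it factors into an $\Omega$-dependent Korn constant, the $\slp$-ellipticity constant, and a finite-dimensional equivalence constant on $\RR_d$ depending on $g$), at the price of invoking the Poincar\'e--Korn inequality on $\RR_d^\perp$ as a black box -- a standard result whose usual proof is itself the very compactness argument the paper writes out. The paper's version is self-contained modulo Korn's second inequality and Rellich, but is non-constructive and gives no handle on the size of $\c{norm}$. One small point worth making explicit in your write-up: the injectivity of $\rr\mapsto(g_1(\rr,\bignull),\dots,g_D(\rr,\bignull))$ on $\RR_d$ is precisely the content of~\eqref{eq:equivnorm1}, and linearity of each $g_j$ on the product space $\HH$ (not just in the first argument) is what justifies $g_j(\uu,\pphi)-g_j(\uu^\perp,\pphi)=g_j(\rr,\bignull)$; both are available by hypothesis, so there is no gap.
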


\begin{proof}
  Firstly, due to boundedness of $g$ and $\norm{\strain(\uu)}{\LL^2(\Omega)}
  \lesssim \norm\uu{\H^1(\Omega)}$ there holds $\enorm{(\uu,\pphi)}\lesssim
  \norm{(\uu,\pphi)}\HH.$
  Secondly, we argue by contradiction to prove the converse estimate:
  Assume that there exist functions $(\uu_n,\pphi_n)$ with $\norm{(\uu_n,\pphi_n)}\HH > n
  \enorm{(\uu_n,\pphi_n)}$ for all $n\in\N$. Define
  \begin{align*}
    (\vv_n,\ppsi_n) := \frac{(\uu_n,\pphi_n)}{\norm{(\uu_n,\pphi_n)}\HH}.
  \end{align*}
  Then, it follows $\enorm{(\vv_n,\ppsi_n)} < 1/n$ and thus
  $\strain_{jk}(\vv_n)\to 0$ in $L^2(\Omega)$ for $j,k=1,\dots,d$ as well as $\ppsi_n\to\bignull$ in
  $\H^{-1/2}(\Gamma)$.
  By definition of $(\vv_n,\ppsi_n)$, there holds $\norm{(\vv_n,\ppsi_n)}\HH =
  1$ and we may extract a weakly convergent subsequence with
  $(\vv_{n_\ell},\ppsi_{n_\ell}) \rightharpoonup (\vv,\ppsi)$ in $\HH$.
  Next, we conclude that $\ppsi_{n_\ell}\to\ppsi=\bignull$ in
  $\H^{-1/2}(\Gamma)$ and $\vv_{n_\ell}\to\vv$ in $\LL^2(\Omega)$, where the
  latter follows from weak convergence $\vv_{n_\ell}\rightharpoonup\vv$ in
  $\H^1(\Omega)$ and the Rellich compactness theorem.
  Weak lower semi-continuity of $\enorm{(\cdot,\cdot)}$ implies $\enorm{(\vv,\ppsi)}=0$ and thus
  $\strain(\vv)=\bignull$ and $|g( \vv,\bignull)|=0$. Due to
  $\ker(\strain) =\RR_d$ and~\eqref{eq:equivnorm1}, it follows that
  $\vv=\bignull$. 
  Moreover, with \textit{Korn's second inequality}, cf. e.g.~\cite[Theorem~4.17]{s},
  we infer
  \begin{align*}
    \norm{\vv_{n_\ell}-\vv}{\H^1(\Omega)}^2 \lesssim
    \norm{\strain(\vv_{n_\ell})-\strain(\vv)}{\LL^2(\Omega)}^2 +
    \norm{\vv_{n_\ell}-\vv}{\LL^2(\Omega)}^2 \xrightarrow{\ell\to\infty} 0
  \end{align*}
  and therefore
  $(\vv_{n_\ell},\ppsi_{n_\ell})\to(\bignull,\bignull)$ in $\HH$, which contradicts
  $\norm{(\vv_{n_\ell},\ppsi_{n_\ell})}\HH = 1$. This concludes the proof.
\end{proof}

The following proposition provides the equivalent norm used to analyze the
symmetric coupling as well as the Johnson-N\'ed\'elec coupling (see
Section~\ref{sec:jn} below).

\begin{proposition}\label{prop:richenough}
  Let $\YY_0\subseteq\YY\cap\LL^2(\Gamma)$ be a subspace which satisfies 
  assumption~\eqref{eq:assonYYh} of Theorem~\ref{thm:solvability}.
  Let $\rr^1,\dots,\rr^D$ with $D=\dim(\RR_d)$ denote a basis of the rigid body
  motions and let $\Pi_0 : \LL^2(\Gamma) \to \YY_0$ be the $\LL^2$-orthogonal
  projection. Then, $\xxi^j:=\Pi_0(\rr^j)$ for $j=1,\dots,D$ are linearly
  independent. Moreover, the functionals $g_j \in\HH^*$ defined by
  \begin{align}\label{eq:richenough:gj}
    g_j(\uu,\pphi) := \dual{\xxi^j}{(\tfrac12-\dlp)\uu+\slp\pphi}_\Gamma
    \quad\text{for }(\uu,\pphi)\in\HH
  \end{align}
  fulfill assumption~\eqref{eq:equivnorm1} of Lemma~\ref{lemma:equivnorm}.
  In particular,
  \begin{align}\label{eq:richenough:g}
    \enorm{(\uu,\pphi)}^2 := \norm{\strain(\uu)}{\LL^2(\Omega)}^2 +
    \dual\pphi{\slp\pphi}_\Gamma + \sum_{j=1}^D
    |\dual{\xxi^j}{(\tfrac12-\dlp)\uu+\slp\pphi}_\Gamma|^2
  \end{align}
  is an equivalent norm on $\HH$, and the norm equivalence constant $\setc{norm}>0$ in
  \begin{align}
   \c{norm}^{-1}\, \norm{(\uu,\pphi)}{\HH}
   \le \enorm{(\uu,\pphi)}
   \le \c{norm}\, \norm{(\uu,\pphi)}{\HH}
   \quad\text{for all }(\uu,\pphi)\in\HH
  \end{align}
  depends only on $\Omega$, $\YY_0$, $\lext$, and $\mext$.
\end{proposition}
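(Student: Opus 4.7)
The plan is to verify the two hypotheses of Lemma~\ref{lemma:equivnorm}---continuity of each $g_j$ together with the non-degeneracy~\eqref{eq:equivnorm1}---while simultaneously establishing linear independence of $\{\xxi^j\}_{j=1}^D$. Continuity of $g_j$ is immediate from the mapping properties collected in Section~\ref{sec:intop}: since $\xxi^j\in\LL^2(\Gamma)\hookrightarrow\H^{-1/2}(\Gamma)$ and both $(\tfrac12-\dlp)$ and $\slp$ are bounded into $\H^{1/2}(\Gamma)$, combination with the trace inequality yields $|g_j(\uu,\pphi)|\lesssim\norm{\xxi^j}{\LL^2(\Gamma)}\,\norm{(\uu,\pphi)}{\HH}$. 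The structural identity I will exploit is that $\ker(\tfrac12+\dlp)=\RR_d$ gives $(\tfrac12-\dlp)\rr=\rr$ for every $\rr\in\RR_d$, so
\begin{align*}
  g_j(\rr,\bignull)=\dual{\xxi^j}{\rr}_\Gamma.
\end{align*}

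Next I would prove linear independence of the projected basis. Suppose $\sum_j c_j\xxi^j=\Pi_0\bigl(\sum_j c_j\rr^j\bigr)=\bignull$ and set $\rr:=\sum_j c_j\rr^j\in\RR_d$. Since the defect $\rr-\Pi_0\rr$ is $\LL^2$-orthogonal to $\YY_0$, one has $\dual{\xxi}{\rr}_\Gamma=\dual{\xxi}{\Pi_0\rr}_\Gamma=0$ for every $\xxi\in\YY_0$; assumption~\eqref{eq:assonYYh} then forces $\rr=\bignull$, and since $\{\rr^j\}$ is a basis of $\RR_d$ all coefficients $c_j$ vanish. Consequently the symmetric Gram matrix $M\in\R^{D\times D}$ with entries $M_{jk}:=\dual{\xxi^j}{\xxi^k}_\Gamma$ is positive definite.

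The non-degeneracy~\eqref{eq:equivnorm1} then reduces to linear algebra. Given $\rr=\sum_k\beta_k\rr^k\in\RR_d$, self-adjointness of the $\LL^2$-orthogonal projection $\Pi_0$ yields
\begin{align*}
  g_j(\rr,\bignull)=\dual{\xxi^j}{\rr}_\Gamma=\dual{\Pi_0\rr^j}{\rr}_\Gamma=\dual{\rr^j}{\Pi_0\rr}_\Gamma=\sum_k\beta_k\,\dual{\xxi^j}{\xxi^k}_\Gamma=(M\beta)_j,
\end{align*}
so $|g(\rr,\bignull)|^2=|M\beta|^2=0$ forces $\beta=0$ by positive definiteness of $M$, hence $\rr=\bignull$. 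With the two hypotheses of Lemma~\ref{lemma:equivnorm} established, a direct appeal to that lemma delivers the asserted norm equivalence, and the equivalence constant inherits its dependence on $\Omega$, $\lext$, $\mext$ from the lemma and on $\YY_0$ through the $\xxi^j$. The main conceptual step is recognising that, modulo the identity $(\tfrac12-\dlp)|_{\RR_d}=\mathrm{id}$, the entire argument collapses to positive definiteness of one Gram matrix, whose non-singularity is precisely what~\eqref{eq:assonYYh} encodes.
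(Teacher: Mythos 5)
Your proof is correct and follows essentially the same route as the paper: both rest on the self-adjointness of $\Pi_0$ to translate assumption~\eqref{eq:assonYYh} into injectivity of $\Pi_0|_{\RR_d}$ (hence linear independence of the $\xxi^j$), on the identity $(\tfrac12-\dlp)\rr=\rr$ for $\rr\in\RR_d$ to reduce $g_j(\rr,\bignull)$ to $\dual{\xxi^j}{\rr}_\Gamma$, and then on Lemma~\ref{lemma:equivnorm}. Your Gram-matrix formulation is just a slightly more explicit linear-algebra phrasing of the paper's direct reformulation of~\eqref{eq:assonYYh}; no substantive difference.
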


\begin{proof}
  We work out an alternative formulation of~\eqref{eq:assonYYh}.
  With $\dual\xxi\rr_\Gamma = \dual{\Pi_0 \xxi}\rr_\Gamma = \dual\xxi{\Pi_0
  \rr}_\Gamma$, condition~\eqref{eq:assonYYh} becomes
  \begin{align*}
    \forall\rr\in\RR_d\backslash\{\bignull\}\exists\xxi\in\YY_0
    \quad\dual{\xxi}{\Pi_0\rr}_\Gamma \neq 0.
  \end{align*}
  Clearly, this is equivalent to $\Pi_0(\rr)\neq0$ for all $\rr\in\RR_d
  \backslash\{\bignull\}$, which yields that the functions
    $\xxi^j:=\Pi_0(\rr^j)$, for $j=1,\dots,D$, {are linearly independent.}
  Therefore, we can reformulate condition~\eqref{eq:assonYYh} as
  \begin{align}\label{eq:assonYYh2}
    \forall\rr\in\RR_d\backslash\{\bignull\}\exists j\in\{1,\dots,D\} \quad
    \dual{\xxi^j}{\rr}_\Gamma \neq 0.
  \end{align} 
  The functionals $g_j$ are well-defined, linear, and bounded. To
  see~\eqref{eq:equivnorm1}, we stress that due to $\ker(\tfrac12+\dlp)= \RR_d$,
  \begin{align*}
    g_j(\rr,\bignull) = \dual{\xxi^j}{(\tfrac12-\dlp)\rr}_\Gamma =
    \dual{\xxi^j}\rr_\Gamma \quad\text{for }j=1,\dots,D \text{ and }
    \rr\in\RR_d.
  \end{align*}
  From~\eqref{eq:assonYYh2} we infer that there exists $j\in\{1,\dots,D\}$ such
  that $g_j(\rr,\bignull)\neq0$. Therefore,~\eqref{eq:equivnorm1} holds for 
  \begin{align*}
    |g(\uu,\pphi)|^2 = \sum_{j=1}^D g_j(\uu,\pphi)^2 = \sum_{j=1}^D
    |\dual{\xxi^j}{(\tfrac12-\dlp)\uu+\slp\pphi}_\Gamma|^2.
  \end{align*}
  This concludes the proof.
\end{proof}

\subsection{Proof of Theorem~\ref{thm:solvability}}\label{sec:proof:solvability}
As far as existence and uniqueness of solutions is concerned, it suffices to
consider the Galerkin formulation~\eqref{eq:sym:weakform:disc}, since this covers 
the case $\HH_h=\HH$ as well. With assumption~\eqref{eq:assonYYh} and
$\YY_0\subseteq\YY_h \cap \LL^2(\Gamma)$, Proposition~\ref{prop:richenough} allows to
apply Proposition~\ref{prop:equivalence}. Hence, we may equivalently ask for
the unique solvability of~\eqref{eq:sym:weakformmod} instead
of~\eqref{eq:sym:weakform3} resp.~\eqref{eq:sym:weakform:disc}.
To this end, we define the nonlinear operator $\widetilde\BB : \HH_h \to \HH_h^*$ by 
  \begin{align*}
    \widetilde\BB(\uu_h,\pphi_h) := \widetilde b( (\uu_h,\pphi_h),\cdot).
  \end{align*} 
  First, we rewrite
  equation~\eqref{eq:sym:weakformmod} as an equivalent operator equation:
  Find
  $(\uu_h,\pphi_h)\in\HH_h$ such that
  \begin{align}\label{eq:sym:operatoreq}
    \widetilde\BB(\uu_h,\pphi_h) = \FF \quad\text{in }\HH_h^*.
  \end{align}
  
  \textbf{Step~1 }(\textit{Lipschitz continuity of $\widetilde\BB$}).
  Due to the Lipschitz continuity~\eqref{eq:lipA} of $\mon$ and the boundedness
  of the boundary integral operators, it clearly follows that $\widetilde\BB$ is
  also Lipschitz continuous.
  The Lipschitz constant $\c{lip}>0$ in
  \begin{align}\label{eq:sym:lipBB}
   \norm{\widetilde\BB(\uu_h,\pphi_h)-\widetilde\BB(\vv_h,\ppsi_h)}{\HH^*} \leq \c{lip}
   \norm{(\uu_h,\pphi_h)-(\vv_h,\ppsi_h)}\HH,
  \end{align}
  for all $(\uu_h,\pphi_h),(\vv_h,\ppsi_h)\in\HH$,
  thus depends  only on $\mon,\Omega, \lext$, and $\mext$.
  
  \textbf{Step~2 }(\textit{Strong monotonicity of~$\widetilde\BB$}).
  We have to prove that, for all $(\uu_h,\pphi_h),(\vv_h,\ppsi_h)\in\HH$,
  \begin{align}
    \dual{\widetilde\BB(\uu_h,\pphi_h)-\widetilde\BB(\vv_h,\ppsi_h)}{(\uu_h-\vv_h,\pphi_h-\ppsi_h)}
    \geq \c{mon} \norm{(\uu_h-\vv_h,\pphi_h-\ppsi_h)}{\HH}^2.
  \end{align}
  To abbreviate notation, let $(\ww_h,\cchi_h) :=
  (\uu_h-\vv_h,\pphi_h-\ppsi_h)$. Then, we get
  \begin{align*}
    &\dual{\widetilde\BB(\uu_h,\pphi_h)-\widetilde\BB(\vv_h,\ppsi_h)}{(\ww_h,\cchi_h)}
    \\
    &\quad= \dual{\mon\strain(\uu_h)-\mon\strain(\vv_h)}{\strain(\ww_h)}_\Omega  
    + \dual{\hyp\ww_h}{\ww_h}_\Gamma +
    \dual{(\dlp'-\tfrac12)\cchi_h}{\ww_h}_\Gamma \\ 
    &\qquad\qquad +\dual{\cchi_h}{(\tfrac12-\dlp)\ww_h+\slp\cchi_h}_\Gamma + \sum_{j=1}^D
    |\dual{\xxi^j}{(\tfrac12-\dlp)\ww_h+\slp\cchi_h}_\Gamma|^2 =: I
  \end{align*}
  Next, we use strong monotonicity~\eqref{eq:monA} of $\mon$ and positive
  semi-definiteness~\eqref{eq:hypsemidef} of
  $\hyp$ to estimate
  \begin{align*}
    I &\geq \c{monA} \norm{\strain(\ww_h)}{\LL^2(\Omega)}^2 +
    \dual{\cchi_h}{\slp\cchi_h}_\Gamma + \sum_{j=1}^D
    |\dual{\xxi^j}{(\tfrac12-\dlp)\ww_h+\slp\cchi_h}_\Gamma|^2 \\
    & \geq  \min\{\c{monA},1\} \, \Big(\norm{\strain(\ww_h)}{\LL^2(\Omega)}^2 +
    \dual{\cchi_h}{\slp\cchi_h}_\Gamma + \sum_{j=1}^D
    |\dual{\xxi^j}{(\tfrac12-\dlp)\ww_h+\slp\cchi_h}_\Gamma|^2 \Big) \\
    &= \min\{\c{monA},1\}\,\enorm{(\ww_h,\cchi_h)}^2.
  \end{align*}
  Finally, the norm equivalence of Proposition~\ref{prop:richenough}
  yields strong monotonicity, where
  $\c{mon}=\min\{\c{monA},1\}\c{norm}^{-1}>0$ depends only on $\mon$, $\Omega$,
  $\lext$, $\mext$ and $\YY_0$.
  
  \textbf{Step~3 }(\textit{Unique solvability and C\'ea lemma}).
  The main theorem on strongly monotone operators,
  see e.g.~\cite[Section~25]{zeidler}, states that the operator formulation 
  ~\eqref{eq:sym:operatoreq} and thus
  the Galerkin formulation~\eqref{eq:sym:weakform:disc} admits a unique 
  solution $(\uu_h,\pphi_h)\in\HH_h$.
  For $\HH_h=\HH$, we see that also the symmetric formulation~\eqref{eq:sym:weakform4},
  admits a unique solution $(\uu,\pphi)\in\HH$.
  Finally, standard theory~\cite[Section~25]{zeidler} also proves the validity of C\'ea's lemma~\eqref{eq:sym:cea}, 
  where $\c{cea}=\c{lip}/\c{mon}>0$ depends only on $\Omega$, $\mon$, $\lext$,
  $\mext$ and $\YY_0$.
\qed

\begin{remark}
  Our analysis unveils that~\eqref{eq2:sym:weakform} tackles the rigid body
  motions in the interior domain. 
  We have seen in~\eqref{eq:sym:notmonotone} that this information is lost when
  trying to prove ellipticity of $b(\cdot,\cdot)$, but can be reconstructed by
  adding appropriate terms to $b(\cdot,\cdot)$.
  We stress that the radiation condition~\eqref{eq:modelproblem:radcond}
  fixes the rigid body motion in the exterior $\Omegaext$, see
  also Section~\ref{sec:modelproblem:tp}.
  Since the interior and exterior solution are coupled via
  equation~\eqref{eq2:sym:weakform} this information is transferred
  by~\eqref{eq2:sym:weakform} from the exterior to the interior. 
  Thus, adding terms to $b(\cdot,\cdot)$ that satisfy~\eqref{eq2:sym:weakform}
  for fixed test-functions seems to be a natural approach.
\end{remark}

\subsection{Proof of Theorem~\ref{thm:assumption}}\label{sec:proof:assumption}
Let $\rr^1,\dots,\rr^D$ be a basis of the rigid body motions $\RR_d$
and let $\Pi_0:\LL^2(\Gamma)\to\PP^0(\EE_h^\Gamma)$ denote the $\LL^2$-projection.
We shall use the observation from the proof of Proposition~\ref{prop:richenough} that
assumption~\eqref{eq:assonYYh} is equivalent to the fact that $\Pi_0(\rr^j)$, for $j=1,\dots,D$
are linearly independent.

\begin{proof}[Proof of Theorem~\ref{thm:assumption} for $d=2$]
 Let
  \begin{align*}
    \rr^1:=\vector10{}, \quad \rr^2:=\vector01{}, \quad
    \rr^3:=\vector{-\xx_2}{\xx_1}{}
  \end{align*}
  denote the canonical basis of $\RR_2$, and let $\alpha_1,\alpha_2,\alpha_3\in\R$ fulfill
  \begin{align}\label{eq:lindep2d}
    \alpha_1 \Pi_0(\rr^1) + \alpha_2 \Pi_0(\rr^2) +\alpha_3 \Pi_0(\rr^3) =
    \bignull.
  \end{align}
  We stress that $\Pi_0(\rr^1) = \rr^1$ and $\Pi_0(\rr^2)=\rr^2$. For
  $E\in\EE_h^\Gamma$, we get
  \begin{align*} 
    \Pi_0(\rr^3)|_E = \frac1{|E|} \vector{-\int_E \xx_2 \,d\Gamma_{\xx}}{\int_E
    \xx_1 \,d\Gamma_{\xx}}{} =
    \vector{-\cm_2^E}{\cm_1^E}{},
  \end{align*}
  where $\cm^E = (\cm_1^E,\cm_2^E)^T$ denotes the midpoint of a boundary element $E$.
  Therefore,~\eqref{eq:lindep2d} can be written as
  \begin{align}\label{eq:lindep2d1}
    \vector{\alpha_1}{\alpha_2}{} + \alpha_3\vector{-\cm_2^E}{\cm_1^E}{} = \bignull
    \quad\text{for all }E\in\EE_h^\Gamma.
   \end{align}
  Altogether, we thus obtain $\alpha_3\cm^E = \alpha_3\cm^{E'}$ for all
  $E,E'\in\EE_h^\Gamma$, which can only hold for $\alpha_3=0$.
  This implies $\alpha_1\rr^1+\alpha_2\rr^2=0$ and hence
  $\alpha_1=0=\alpha_2$. Therefore, $\Pi_0(\rr^j)$,
  $j=1,\dots,3=D$, are linearly independent which is equivalent to~\eqref{eq:assonYYh}.
\end{proof}

\begin{proof}[Proof of Theorem~\ref{thm:assumption} for $d=3$]
  Let
  \begin{align*}
    \rr^1:=\vector100, \rr^2:=\vector010, \rr^3:=\vector001,
    \rr^4:=\vector{-\xx_2}{\xx_1}0, \rr^5:= \vector0{-\xx_3}{\xx_2},
    \rr^6:=\vector{\xx_3}0{-\xx_1}
  \end{align*}
  denote the canonical basis of $\RR_3$. We stress that
  $\Pi_0(\rr^j)=\rr^j$ for $j=1,2,3$, and
  \begin{align*}
    \Pi_0(\rr^4)|_E = \vector{-\cm_2^E}{\cm_1^E}0, \quad
    \Pi_0(\rr^5)|_E = \vector0{-\cm_3^E}{\cm_2^E}, \quad
    \Pi_0(\rr^6)|_E = \vector{\cm_3^E}0{-\cm_1^E}
  \end{align*}
  for all faces $E\in\EE_h^\Gamma$, where $\cm^E =
  (\cm_1^E,\cm_2^E,\cm_3^E)^T\in\R^3$ denotes the center of mass of an element
  $E\in\EE_h^\Gamma$. The main ingredient for the proof is the geometric
  observation of Lemma~\ref{lemma:geom3d} from the Appendix:
  There are at least three elements $A,B,C\in\EE_h^\Gamma$ such that the corresponding
  centers of mass $\aa,\bb,\cc$ do not lie on one line. 
  Let $\alpha_1,\alpha_2,\alpha_3,\alpha_4,\alpha_5,\alpha_6
  \in\R$ fulfill
  \begin{align*}
    \alpha_1\Pi_0(\rr^1)+\alpha_2\Pi_0(\rr^2)+\alpha_3\Pi_0(\rr^3)+
    \alpha_4\Pi_0(\rr^4)+\alpha_5\Pi_0(\rr^5)+\alpha_6\Pi_0(\rr^6) = \bignull,
  \end{align*}
  which is equivalent to
  \begin{align}\label{eq:lindep3d}
    \vector{\alpha_1}{\alpha_2}{\alpha_3} + 
    \begin{pmatrix} -\cm_2^E & 0 & \cm_3^E \\
                    \cm_1^E & -\cm_3^E & 0 \\
		    0 & \cm_2^E & -\cm_1^E \end{pmatrix}
    \vector{\alpha_4}{\alpha_5}{\alpha_6} = \vector000
  \end{align}
  for all $E\in\EE_h^\Gamma$.
  We take the third equation of~\eqref{eq:lindep3d} for the three elements $A,B,C$
  corresponding to $\aa,\bb,\cc$ and get
  \begin{align*}
    \begin{pmatrix}
      1 & \aa_2 & -\aa_1 \\
      1 & \bb_2 & -\bb_1 \\
      1 & \cc_2 & -\cc_1
    \end{pmatrix}
    \vector{\alpha_3}{\alpha_5}{\alpha_6} = \bignull,
  \end{align*}
  which is only satisfied if $\alpha_3=\alpha_5=\alpha_6=0$ or if 
  the vectors
  \begin{align}\label{eq:lindep3d2}
    \vector111, \vector{\aa_2}{\bb_2}{\cc_2}, \vector{-\aa_1}{-\bb_1}{-\cc_1}
    \quad\text{are linearly dependent.}
  \end{align}
  
  \textbf{Case 1 } ($\alpha_3=\alpha_5=\alpha_6=0$).
  We insert
  $\alpha_3,\alpha_5,\alpha_6$ into the first two equations
  of~\eqref{eq:lindep3d} and infer for all $E\in\EE_h^\Gamma$
  \begin{align*}
    \vector{\alpha_1}{\alpha_2}{} + \alpha_4 \vector{-\cm_2^E}{\cm_1^E}{} =
    \bignull \quad\text{or equivalently }
  \end{align*}
  \begin{align*}
    \alpha_4 \vector{\cm_1^E}{\cm_2^E}{} =
    \alpha_4\vector{\cm_1^{E'}}{\cm_2^{E'}}{} \quad\text{for all
    }E,E'\in\EE_h^\Gamma,
  \end{align*}
  which can only hold if $\alpha_4=0$. Otherwise, all centers of mass would
  lie on a straight line which would contradict the choice of $A,B,C$. Therefore,
  we first get $\alpha_3=\alpha_4=\alpha_5=\alpha_6=0$, and this also implies 
  $\alpha_1=\alpha_2=0$.

  \textbf{Case 2 } (\eqref{eq:lindep3d2} holds). 
  There exist constants $\gamma',\delta' \in\R$ such that
  \begin{align}\label{eq:lindep3dcase2}
    \vector{\aa_1}{\bb_1}{\cc_1} = \gamma'  \vector{\aa_2}{\bb_2}{\cc_2} +
    \delta' \vector111.
  \end{align}
  Next, we take the first equation of~\eqref{eq:lindep3d} for the three elements
  corresponding to $\aa,\bb,\cc$ and get
  \begin{align*}
    \begin{pmatrix}
      1 & -\aa_2 & \aa_3 \\
      1 & -\bb_2 & \bb_3 \\
      1 & -\cc_2 & \cc_3
    \end{pmatrix}
    \vector{\alpha_1}{\alpha_4}{\alpha_6} = \bignull,
  \end{align*}
  which is only fulfilled if $\alpha_1=\alpha_4=\alpha_6=0$ or if the vectors
  \begin{align}\label{eq:lindep3d3}
    \vector111, \vector{-\aa_2}{-\bb_2}{-\cc_2}, \vector{\aa_3}{\bb_3}{\cc_3}
    \quad\text{are linearly dependent.}
  \end{align}
  
  \textbf{Case 2a } ($\alpha_1=\alpha_4=\alpha_6=0$). We insert
  $\alpha_1,\alpha_4,\alpha_6$ into equation two and three
  in~\eqref{eq:lindep3d} and infer for all $E\in\EE_h^\Gamma$
  \begin{align*}
    \vector{\alpha_2}{\alpha_3}{} + \alpha_5 \vector{-\cm_3^E}{\cm_2^E}{} =
    \bignull \quad\text{or equivalently }
  \end{align*}
  \begin{align*}
    \alpha_5 \vector{\cm_2^E}{\cm_3^E}{} =
    \alpha_5\vector{\cm_2^{E'}}{\cm_3^{E'}}{} \quad\text{for all
    }E,E'\in\EE_h^\Gamma,
  \end{align*}
  which implies $\alpha_5=0$ as in Case 1.
  Arguing as above, we first get $\alpha_1=\alpha_4=\alpha_5=\alpha_6=0$ and finally
  also $\alpha_2=\alpha_3=0$.
  
  \textbf{Case 2b } (\eqref{eq:lindep3d3} holds). 
  There exist constants $\lambda,\mu\in\R$ such that
  \begin{align*}
    \vector{\aa_2}{\bb_2}{\cc_2} = \lambda  \vector{\aa_3}{\bb_3}{\cc_3} +
    \mu \vector111.
  \end{align*}
  Together with~\eqref{eq:lindep3dcase2}, we get
  \begin{align*}
    \aa = \aa_3 \vector\gamma\lambda1 + \vector\delta\mu0, \quad
    \bb = \bb_3 \vector\gamma\lambda1 + \vector\delta\mu0, \quad
    \cc = \cc_3 \vector\gamma\lambda1
    + \vector\delta\mu0,
  \end{align*}
  where $\gamma = \gamma'\lambda$ and $\delta = \gamma'\mu+\delta'$. This means that
  $\aa,\bb,\cc$ lie on one line which contradicts our choice of
  the elements $A,B,C$. In particular, case 2b cannot occur.

  Altogether, we have shown
  $\alpha_1=\alpha_2=\alpha_3=\alpha_4=\alpha_5=\alpha_6=0$
  in~\eqref{eq:lindep3d}, and therefore the orthogonal projections
  $\Pi_0(\rr^j)$, $j=1,\dots,6=D$, are linearly independent. Since this is 
  equivalent to~\eqref{eq:assonYYh}, we conclude the proof.
\end{proof}

\section{Johnson-N\'ed\'elec coupling}\label{sec:jn}
\noindent
This section deals with the Johnson-N\'ed\'elec coupling, see
e.g.~\cite{johned,zkb79} for linear Laplace problems and~\cite{ghs09,s3} for
linear elasticity problems.
In contrast to~\cite{ghs09} resp. \cite{s3} we avoid the use of interior
Dirichlet boundaries resp. an explicit stabilization of the coupling equations.
The derivation of the variational formulation~\eqref{eq:jn:weakform} of the
Johnson-N\'ed\'elec coupling and the proof of equivalence to the model
problem~\eqref{eq:modelproblem} are done as for the Laplace problem, see
e.g.~\cite{affkmp,gh95} for the derivation.

\subsection{Variational formulation}\label{sec:jn:weakform}
The Johnson-N\'ed\'elec coupling reads as follows: Find $(\uu,\pphi)\in\HH =
\H^1(\Omega)\times\H^{-1/2}(\Gamma)$ such that
\begin{subequations}\label{eq:jn:weakform}
\begin{align}
  \dual{\mon\strain(\uu)}{\strain(\vv)}_\Omega - \dual{\pphi}{\vv}_\Gamma
  &=\dual{\ff}{\vv}_\Omega + \dual{\pphi_0}\vv_\Gamma \\
  \dual{\ppsi}{(\tfrac12-\dlp)\uu+\slp\pphi}_\Gamma &=
  \dual\ppsi{(\tfrac12-\dlp)\uu_0}_\Gamma
\end{align}
holds for all $(\vv,\ppsi)\in\HH$.
\end{subequations}
Note that the second equation of the Johnson-N\'ed\'elec equations
\eqref{eq:jn:weakform} is the same as for the symmetric
coupling~\eqref{eq:sym:weakform}.
We define a mapping $b:\HH\times\HH\to\R$ and a
continuous linear functional $\FF\in\HH^*$ by
\begin{align}\label{eq:jn:defbb}
  b( (\uu,\pphi),(\vv,\ppsi)) := 
  \dual{\mon\strain(\uu)}{\strain(\vv)}_\Omega - \dual{\pphi}{\vv}_\Gamma
  +\dual{\ppsi}{(\tfrac12-\dlp)\uu+\slp\pphi}_\Gamma 
\end{align}
as well as
\begin{align}\label{eq:jn:defFF}
  \FF(\vv,\ppsi) := \dual{\ff}{\vv}_\Omega + \dual{\pphi_0}\vv_\Gamma +
  \dual\ppsi{(\tfrac12-\dlp)\uu_0}_\Gamma
\end{align}
for all $(\uu,\pphi),(\vv,\ppsi)\in\HH$. 
Problem~\eqref{eq:jn:weakform} can equivalently be stated as follows: Find
$(\uu,\pphi)\in\HH$ such that
\begin{align}\label{eq:jn:weakform2}
  b( (\uu,\pphi),(\vv,\ppsi)) = \FF(\vv,\ppsi) \quad\text{holds for all } (\vv,\ppsi)\in\HH. 
\end{align}
We infer from~\eqref{eq:jn:defbb} that 
\begin{align} 
  b( (\rr,\bignull),(\rr,\bignull)) = 0 \quad\text{for }\rr\in\RR_d.
\end{align}
Therefore, the mapping
$b(\cdot,\cdot)$ cannot be elliptic, and we proceed as in
Section~\ref{sec:sym} to prove well-posedness of~\eqref{eq:jn:weakform} and its
Galerkin discretization.

\subsection{Main result}
According to~\cite{sw}, there exists a constant $1/2\leq c_\dlp<1$ such that
\begin{align}\label{eq:contractionDlp}
  \norm{(\tfrac12+\dlp)\vv}{\slp^{-1}} \leq c_\dlp \norm{\vv}{\slp^{-1}}
  \quad\text{for all } \vv\in\H^{1/2}(\Gamma),
\end{align}
where $\norm\vv{\slp^{-1}}^2 = \dual{\slp^{-1}\vv}\vv$ denotes an equivalent
norm on $\H^{1/2}(\Gamma)$ induced by the inverse of the simple-layer potential.
The following theorem is the main result of this section.
\begin{theorem}\label{thm:jn:solvability}
  Let $c_\dlp<1$ denote the contraction constant~\eqref{eq:contractionDlp} of the double-layer
  potential and assume that $2\c{monA} > c_\dlp (3\lext+2\mext)$. Then, the assertions of
  Theorem~\ref{thm:solvability} hold for the Johnson-N\'ed\'elec coupling
  accordingly.
\end{theorem}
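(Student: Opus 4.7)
The plan is to follow the three-step strategy of Theorem~\ref{thm:solvability} and to modify only the monotonicity estimate, where the assumption $2\c{monA}>c_\dlp(3\lext+2\mext)$ becomes decisive. First I observe that Proposition~\ref{prop:equivalence} applies verbatim to the Johnson-N\'ed\'elec coupling: its proof manipulates only the boundary equation featuring $\dual{\ppsi}{(\tfrac12-\dlp)\uu+\slp\pphi}_\Gamma$, which is identical for both couplings. With the same stabilisation functionals $g_j$ from Proposition~\ref{prop:richenough}, this yields an equivalent stabilised form $\widetilde b$ and the equivalent norm $\enorm{\cdot}$ on $\HH$. Lipschitz continuity of the induced operator $\widetilde\BB$ is routine from (\ref{eq:lipA}) and the continuity of $\slp$ and $\dlp$.

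The essential new step is strong monotonicity. Setting $\ww:=\uu-\vv$ and $\cchi:=\pphi-\ppsi$, the identity $-\dual{\cchi}{\ww}_\Gamma+\dual{\cchi}{(\tfrac12-\dlp)\ww}_\Gamma=-\dual{\cchi}{(\tfrac12+\dlp)\ww}_\Gamma$, strong monotonicity of $\mon$, and the stabilisation give
\[
\dual{\widetilde\BB(\uu,\pphi)-\widetilde\BB(\vv,\ppsi)}{(\ww,\cchi)}
\geq \c{monA}\norm{\strain(\ww)}{\LL^2(\Omega)}^2 -\dual{\cchi}{(\tfrac12+\dlp)\ww}_\Gamma + \norm{\cchi}{\slp}^2+|g(\ww,\cchi)|^2.
\]
The indefinite cross term is absorbed using $\ker(\tfrac12+\dlp)=\RR_d$ together with the contraction property (\ref{eq:contractionDlp}): for any $\rr\in\RR_d$,
\[
|\dual{\cchi}{(\tfrac12+\dlp)\ww}_\Gamma|
=|\dual{\cchi}{(\tfrac12+\dlp)(\ww-\rr)}_\Gamma|
\leq c_\dlp\,\norm{\cchi}{\slp}\,\norm{\ww-\rr}{\slp^{-1}}.
\]
Minimising over $\rr$ via Korn's second inequality in the quotient $\H^1(\Omega)/\RR_d$, combined with the trace theorem and the Lam\'e dependence of $\slp^{-1}$ inherited from the pointwise bound $\stress^{\rm ext}(\uu):\strain(\uu)\leq(3\lext+2\mext)|\strain(\uu)|^2$, leads to an estimate of the form $\inf_{\rr\in\RR_d}\norm{\ww-\rr}{\slp^{-1}}^2\leq(3\lext+2\mext)\norm{\strain(\ww)}{\LL^2(\Omega)}^2$. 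A weighted Young inequality then produces a lower bound $c_1\norm{\strain(\ww)}{\LL^2(\Omega)}^2+c_2\norm{\cchi}{\slp}^2+|g(\ww,\cchi)|^2$ with $c_1,c_2>0$ precisely when $2\c{monA}>c_\dlp(3\lext+2\mext)$; Proposition~\ref{prop:richenough} translates this into strong monotonicity of $\widetilde\BB$ with respect to $\norm{\cdot}{\HH}$.

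Once strong monotonicity and Lipschitz continuity are established, the main theorem on strongly monotone Lipschitz operators~\cite[Section~25]{zeidler} yields unique solvability of (\ref{eq:jn:weakform2}) and of its Galerkin restriction, together with the C\'ea-type quasi-optimality (\ref{eq:sym:cea}), exactly as in Step~3 of the proof of Theorem~\ref{thm:solvability}. The main obstacle is the cross-term bound sketched above: one must propagate $c_\dlp$ and the exterior Lam\'e constants through a trace-Korn estimate sharply enough that the precise material-parameter condition $2\c{monA}>c_\dlp(3\lext+2\mext)$ emerges, rather than a looser, quantitatively uninformative sufficient condition.
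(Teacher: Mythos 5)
Your overall architecture is the right one and matches the paper: the stabilization of Proposition~\ref{prop:equivalence} and the equivalent norm of Proposition~\ref{prop:richenough} carry over verbatim, Lipschitz continuity is routine, and the whole issue is reduced to absorbing the cross term $\dual{\cchi}{(\tfrac12+\dlp)\ww}_\Gamma$ in the monotonicity estimate. However, the step on which everything hinges is not correct as stated. You bound the cross term by $c_\dlp\,\norm{\cchi}{\slp}\,\inf_{\rr\in\RR_d}\norm{\ww-\rr}{\slp^{-1}}$ using only the operator-norm contraction~\eqref{eq:contractionDlp}, and then claim $\inf_{\rr\in\RR_d}\norm{\ww-\rr}{\slp^{-1}}^2\le(3\lext+2\mext)\norm{\strain(\ww)}{\LL^2(\Omega)}^2$. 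That estimate is false in general: $\norm{\vv}{\slp^{-1}}^2=\dual{\slp^{-1}\vv}{\vv}_\Gamma$ splits into the interior Steklov--Poincar\'e energy $\dual{\stp\vv}{\vv}_\Gamma$ plus the \emph{exterior} Dirichlet-to-Neumann energy. Only the interior part is controlled by the interior strain norm with a constant depending solely on $\lext,\mext$ (via~\eqref{eq:stpid},~\eqref{eq:splitid} and~\eqref{eq:stresscont}); the exterior part is positive definite on $\H^{1/2}(\Gamma)$, does not annihilate $\RR_d$, and its bound in terms of $\norm{\strain(\ww)}{\LL^2(\Omega)}$ necessarily carries a domain-dependent trace/Korn constant. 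So your route can only produce a condition of the form $2\c{monA}>c_\dlp\,C(\Omega,\lext,\mext)$ with an uncomputable $C$, not the stated threshold $2\c{monA}>c_\dlp(3\lext+2\mext)$ --- which is exactly the obstacle you flag at the end but do not overcome.

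The paper closes this gap with two ingredients you did not use. First, instead of~\eqref{eq:contractionDlp} it invokes the refined estimate from~\cite{os,s3}, $\norm{(\tfrac12+\dlp)\ww}{\slp^{-1}}^2\le c_\dlp\,\dual{\stp\ww}{\ww}_\Gamma$, which bounds the contracted quantity by the \emph{interior} Steklov--Poincar\'e energy only, so the exterior energy never enters. Second, it introduces the splitting~\eqref{eq:splitting} $\ww=\ww^0+\wwd$ with $\wwd$ the $\stress^{\rm ext}$-harmonic extension of the trace; Betti's formula gives $\dual{\stp\wwd}{\wwd}_\Gamma=\dual{\stress^{\rm ext}(\wwd)}{\strain(\wwd)}_\Omega$, the orthogonality~\eqref{eq:splitid} lets one reabsorb this into $\dual{\stress^{\rm ext}(\ww)}{\strain(\ww)}_\Omega$, and the monotonicity term is first converted via $\c{monA}\norm{\strain(\ww)}{\LL^2(\Omega)}^2\ge(\c{monA}/\c{stresscont})\dual{\stress^{\rm ext}(\ww)}{\strain(\ww)}_\Omega$ with $\c{stresscont}=6\lext+4\mext$. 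A Young inequality with a free parameter $\delta$ then yields positivity of both coefficients precisely when $\c{monA}/\c{stresscont}>c_\dlp/4$, i.e.\ $2\c{monA}>c_\dlp(3\lext+2\mext)$. Without the refined contraction estimate and the energy splitting, the sharp constant cannot be recovered, so as written your proof establishes a weaker, quantitatively different theorem.
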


\subsection{Auxiliary results}
We stress that the results of Section~\ref{sec:sym:stab}--\ref{sec:sym:norm}
also apply to the Johnson-N\'ed\'elec coupling without further modifications.
Additionally, the proof needs some properties of the boundary integral operators and some
results from the works~\cite{os,s3}, which are stated in the following.
First, we introduce the interior Steklov-Poincar\'e operator $\stp:
\H^{1/2}(\Gamma)\to\H^{-1/2}(\Gamma)$ defined by
\begin{align*}
  \stp := \slp^{-1}(\tfrac12+\dlp),
\end{align*}
see e.g.~\cite{hw}.
Note that $\slp$ and $\dlp$ are still defined with respect to the exterior Lam\'e
constants $\lext,\mext$.
We use the estimate
\begin{align*}
  \norm{(\tfrac12+\dlp)\ww}{\slp^{-1}}^2 \leq c_\dlp \dual{\stp\ww}\ww_\Gamma
  \quad\text{for all }\ww\in\H^{1/2}(\Gamma)
\end{align*}
from~\cite{os,s3}, which involves the contraction
constant~\eqref{eq:contractionDlp} of the double-layer potential $\dlp$. 
The last estimate yields
\begin{align}\label{eq:contraction}
  \dual\cchi{(\tfrac12+\dlp)\ww}_\Gamma \leq
  \norm{(\tfrac12+\dlp)\ww}{\slp^{-1}} \norm\cchi\slp \leq
  \sqrt{c_\dlp\dual{\stp\ww}\ww_\Gamma} \norm\cchi\slp\quad\text{for all }(\ww,\cchi)\in\HH.
\end{align}
For $\ww\in\H^1(\Omega)$ we next introduce the splitting
\begin{align}\label{eq:splitting}
\ww^0 := \ww - \wwd, 
\end{align}
where $\wwd\in\H^1(\Omega)$ is the unique weak solution of
\begin{align*}
  \div\,\stress^{\rm ext}(\wwd) &= 0 \quad\text{in }\Omega, \\
  \wwd &= \ww \quad\text{on }\Gamma.
\end{align*}
Then, there holds $\ww^0|_\Gamma = \bignull$ as well as the orthogonality relation
$\dual{\stress^{\rm ext}(\wwd)}{\strain(\ww^0)}_\Omega = 0 = \dual{\stress^{\rm
ext}(\ww^0)}{\strain(\wwd)}_\Omega$. 
Consequently, we see
\begin{align}\label{eq:splitid}
  \dual{\stress^{\rm ext}(\ww)}{\strain(\ww)}_\Omega = \dual{\stress^{\rm
  ext}(\wwd)}{\strain(\wwd)}_\Omega + \dual{\stress^{\rm
  ext}(\ww^0)}{\strain(\ww^0)}_\Omega.
\end{align}
Moreover, $\wwd$ fulfills $\gamma_1^{\rm int}\wwd = \stp\wwd$.
Together with Betti's first formula~\eqref{eq:betti}, we infer
\begin{align}\label{eq:stpid}
  \dual{\stress^{\rm ext}(\wwd)}{\strain(\wwd)}_\Omega = \dual{\gamma_1^{\rm int}
\wwd}{\wwd}_\Gamma = \dual{\stp\wwd}{\wwd}_\Gamma.
\end{align}

\subsection{Proof of Theorem~\ref{thm:jn:solvability}}
Note that Proposition~\ref{prop:equivalence} holds true with $b(\cdot,\cdot)$
resp.\ $F(\cdot)$
replaced by definition~\eqref{eq:jn:defbb} resp.~\eqref{eq:jn:defFF}.
We define the nonlinear operator $\widetilde\BB : \HH\to\HH^*$ by
\begin{align*}
  \dual{\widetilde\BB(\uu_h,\pphi_h)}{(\cdot,\cdot)} := \widetilde 
  b((\uu_h,\pphi_h),(\cdot,\cdot)).
\end{align*}

\textbf{Step~1 }(\textit{Lipschitz continuity of $\widetilde\BB$}).
Arguing as in~\eqref{eq:sym:lipBB} in the proof of Theorem~\ref{thm:solvability}, we
prove Lipschitz continuity of $\widetilde\BB$, where the Lipschitz constant
$\c{lip}>0$ depends only on $\mon$, $\lext$, $\mext$, and $\Omega$. 

\textbf{Step~2 }(\textit{Strong monotonicity of~$\widetilde\BB$}).
We have to prove that, for all $(\uu_h,\pphi_h),(\vv_h,\ppsi_h)\in\HH$,
\begin{align}
  \dual{\widetilde\BB(\uu_h,\pphi_h)-\widetilde\BB(\vv_h,\ppsi_h)}{(\uu_h-\vv_h,\pphi_h-\ppsi_h)}
  \geq \c{mon} \norm{(\uu_h-\vv_h,\pphi_h-\ppsi_h)}{\HH}^2.
\end{align}
To abbreviate notation, let $(\ww_h,\cchi_h) :=
(\uu_h-\vv_h,\pphi_h-\ppsi_h)$. By use of monotonicity~\eqref{eq:monA} of
$\mon$, we see
\begin{align*}  
  &\dual{\widetilde\BB(\uu_h,\pphi_h) -
  \widetilde\BB(\vv_h,\ppsi_h)}{\ww_h}_\Gamma \\
  &\quad = \dual{\mon\uu_h-\mon\vv_h}{\ww_h}_\Omega -
  \dual{\cchi_h}{\ww_h}_\Gamma + \dual{\cchi_h}{(\tfrac12-\dlp)\ww_h +
  \slp\cchi_h}_\Gamma \\
  &\qquad + \sum_{j=1}^D
  |\dual{\xxi^j}{(\tfrac12-\dlp)\ww_h+\slp\cchi_h}_\Gamma|^2 \\ 
  &\quad \geq \c{monA} \norm{\strain(\ww_h)}{\LL^2(\Omega)}^2 -
  \dual{\cchi_h}{(\tfrac12+\dlp)\ww_h}_\Gamma +
  \dual{\cchi_h}{\slp\cchi_h}_\Gamma  \\
  &\qquad + \sum_{j=1}^D |\dual{\xxi^j}{(\tfrac12-\dlp)\ww_h+\slp\cchi_h}_\Gamma|^2 \\
  &\quad =: I_1 -I_2 + I_3 + I_4.
\end{align*}
Next, we use the splitting~\eqref{eq:splitting} for $\ww_h = \ww^0 + \wwd$. 
Together with~\eqref{eq:splitid} and~\eqref{eq:stresscont}, where
$\c{stresscont} = 6\lext+4\mext$, we get 
\begin{align*} 
  I_1 \geq \frac{\c{monA}}{\c{stresscont}} \dual{\stress^{\rm
  ext}(\ww_h)}{\strain(\ww_h)}_\Omega &= \frac{\c{monA}}{\c{stresscont}}
  \dual{\stress^{\rm ext}(\ww^0)}{\strain(\ww^0)}_\Omega +
  \frac{\c{monA}}{\c{stresscont}}
  \dual{\stress^{\rm ext}(\wwd)}{\strain(\wwd)}_\Omega \\ &=: I_{11}+I_{12}.
\end{align*}
Estimate~\eqref{eq:contraction} and Young's inequality yield for $\delta>0$
\begin{align*} 
  I_2 = \dual{\cchi_h}{(\tfrac12+\dlp)\wwd}_\Gamma
  \leq \sqrt{c_\dlp\dual{\stp\wwd}\wwd_\Gamma} \norm{\cchi_h}\slp  
  \leq \frac\delta{2} c_\dlp\dual{\stp\wwd}\wwd_\Gamma 
  + \frac{\delta^{-1}}2 \norm{\cchi_h}\slp^2.
\end{align*}
With the last inequality and~\eqref{eq:stpid}, we get
\begin{align*}
  I_2 \leq \frac\delta{2} c_\dlp \dual{\stress^{\rm
  ext}(\wwd)}{\strain(\wwd)}_\Omega 
  + \frac{\delta^{-1}}2 \norm{\cchi_h}\slp^2.
\end{align*}
Now, we can further estimate the terms $I_{1}-I_2+I_3$ by
\begin{align*}
  I_1-I_2+I_3 &\geq I_{11} +
  \Big(\frac{\c{monA}}{\c{stresscont}}-\frac\delta{2}c_\dlp\Big)
  \dual{\stress^{\rm ext}(\wwd)}{\strain(\wwd)}_\Omega +
  \Big(1-\frac{\delta^{-1}}2\Big)\norm{\cchi_h}\slp^2 \\
  &\geq \Big(\frac{\c{monA}}{\c{stresscont}}-\frac\delta{2}c_\dlp\Big)
  \dual{\stress^{\rm ext}(\ww_h)}{\strain(\ww_h)}_\Omega +
  \Big(1-\frac{\delta^{-1}}2\Big) \dual{\cchi_h}{\slp\cchi_h}_\Gamma,
\end{align*}
where we used~\eqref{eq:splitid} again.
The assumption $2\c{monA}>c_\dlp(3\lext+2\mext)$ is equivalent to
$\c{monA} / \c{stresscont} > c_\dlp /4$ with $\c{stresscont} = 6\lext +4\mext$.
Therefore, there exists $\delta>0$
such that $C:=\min\{\c{monA}/\c{stresscont}-c_\dlp\delta/2,1-\delta^{-1}/2\}>0$.
Altogether, we infer with~\eqref{eq:splitid} and~\eqref{eq:stressell}
\begin{align*} 
\begin{split}
  I_1-I_2+I_3+I_4 &\geq C \Big(\dual{\stress^{\rm
  ext}(\ww_h)}{\strain(\ww_h)}_\Omega +
  \dual{\cchi_h}{\slp\cchi_h}_\Gamma + \sum_{j=1}^D
  |\dual{\xxi^j}{(\tfrac12-\dlp)\ww_h+\slp\cchi_h}_\Gamma|^2 \Big)\\
  &\geq \widetilde C  \Big(\norm{\strain(\ww_h)}{\LL^2(\Omega)}^2 +
  \dual{\cchi_h}{\slp\cchi_h}_\Gamma + \sum_{j=1}^D
  |\dual{\xxi^j}{(\tfrac12-\dlp)\ww_h+\slp\cchi_h}_\Gamma|^2 \Big) \\
  &= \widetilde C \enorm{(\ww_h,\cchi_h)}^2 \geq \widetilde C\c{norm}^{-1} \norm{(\ww_h,\cchi_h)}\HH^2,
\end{split}
\end{align*}
where $\widetilde C = C \min\{1,\c{stressell}\}$. The constant 
$\c{mon}:=\widetilde C\c{norm}^{-1}>0$ depends only on
$\Omega,\mon,\YY_0$, and on the Lam\'e constants $\lext,\mext$.

\textbf{Step~3 }(\textit{Unique solvability and C\'ea lemma}).
This step is essentially the same as Step~$3$ in the proof of
Theorem~\ref{thm:solvability}.
We thus omit the details.

\begin{remark}
  (i)
  In the linear case $\mon=\stress^{\rm int}$, we may also use an estimate
  from~\cite{s3} in Step~2 of the proof of Theorem~\ref{thm:jn:solvability} and replace the assumption
  $2\c{monA} > c_\dlp (3\lext+2\mext)$ from Theorem~\ref{thm:jn:solvability} with
  \begin{align*}
    \eta := \min\{\lambda^{\rm int}/\lext, \mu^{\rm int}/\mext\}>
    \frac{c_\dlp}4.
  \end{align*}
  \\
  (ii) The assumption $2\c{monA} > c_\dlp (3\lext+2\mext)$, is an assumption on
  the monotonicity constant $\c{monA}$ and the Lam\'e constants $\lext,\mext$ in
  the exterior domain.
  As we have seen for the symmetric coupling the assumption $\c{monA}>0$ suffices to prove
  unique solvability.
  Since the Johnson-N\'ed\'elec coupling is equivalent to the model problem, we
  stress that at least the continuous formulation of the Johnson-N\'ed\'elec
  coupling equations is uniquely solvable.
  In~\cite{os}, Of and Steinbach have shown that the Johnson-N\'ed\'elec coupling
  equations may become indefinite (and hence non-elliptic) for special choices of the model parameters.
  However, the numerical experiments from~\cite{affkmp} show at least
  numerically that the Laplace transmission problem also allows for unique Galerkin
  solutions in the indefinite regime.
  \\
  (iii) Assume a nonlinear Hencky-Von Mises stress-strain relation, i.e. the
  operator from~\eqref{eq:hencky}, with $\widetilde\mu(\cdot) \geq \alpha>0$
  and $\widetilde\mu(\cdot) \leq Kd/2 -\beta$ for some $\alpha,\beta>0$. 
  Then we may replace the assumption $2\c{monA} > c_\dlp (3\lext+2\mext)$ from
  Theorem~\ref{thm:jn:solvability} with
  \begin{align*}
    \eta > \frac{c_\dlp}4,
  \end{align*}
  where $\eta:= \min\{ \inf_{x\in\R_+}\{(K-2/d\widetilde\mu(x)\} / \lext,
  \inf_{x\in\R_+}\{\widetilde\mu(x)\} / \mext\}$.
\end{remark}

\section{Bielak-MacCamy coupling}\label{sec:bmc}
\noindent
In this section we investigate the non-symmetric Bielak-MacCamy one-equation
coupling, see e.g.~\cite{affkmp,bmc,coers} for the Laplace problem.
The derivation of the variational formulation~\eqref{eq:bmc:weakform} as well as
the proof of equivalence to the model problem~\eqref{eq:modelproblem}
essentially follow as for the Johnson-N\'ed\'elec coupling resp.
symmetric coupling, cf.\ e.g.~\cite{affkmp,cfs,gh95}.

\subsection{Variational formulation}
The variational formulation of the Bielak-MacCamy coupling reads as follows:
Find $(\uu,\pphi)\in\HH = \H^1(\Omega)\times\H^{-1/2}(\Gamma)$ such that
\begin{subequations}\label{eq:bmc:weakform}
\begin{align}\label{eq:bmc:weakforma}
  \dual{\mon\strain(\uu)}{\strain(\vv)}_\Omega + \dual{(\tfrac12-\dlp')\pphi}{\vv}_\Gamma
  &=\dual{\ff}{\vv}_\Omega + \dual{\pphi_0}\vv_\Gamma \\
  \dual{\ppsi}{\slp\pphi-\uu}_\Gamma &=
  -\dual\ppsi{\uu_0}_\Gamma
  \label{eq:bmc:weakformb}
\end{align}
holds for all $(\vv,\ppsi)\in\HH$.
\end{subequations}
We sum up the left-hand side and the right-hand side of~\eqref{eq:bmc:weakform}
and define the mapping $b:\HH\times\HH\to\R$ as well as the linear functional
$F\in\HH^*$ by
\begin{align}\label{eq:bmc:defbb}
  b( (\uu,\pphi),(\vv,\ppsi)) := 
  \dual{\mon\strain(\uu)}{\strain(\vv)}_\Omega + \dual{(\tfrac12-\dlp')\pphi}{\vv}_\Gamma
  +\dual{\ppsi}{\slp\pphi-\uu}_\Gamma 
\end{align}
as well as
\begin{align}\label{eq:bmc:defFF}
  \FF(\vv,\ppsi) := \dual{\ff}{\vv}_\Omega + \dual{\pphi_0}\vv_\Gamma - 
  \dual\ppsi{\uu_0}_\Gamma
\end{align}
for all $(\uu,\pphi),(\vv,\ppsi)\in\HH$. 
Then, problem~\eqref{eq:bmc:weakform} can equivalently be stated as follows: Find
$(\uu,\pphi)\in\HH$ such that
\begin{align}\label{eq:bmc:weakform2}
  b( (\uu,\pphi),(\vv,\ppsi)) = \FF(\vv,\ppsi) \quad\text{holds for all } (\vv,\ppsi)\in\HH. 
\end{align}
As for the other coupling formulations $b(\cdot,\cdot)$ is not uniformly elliptic, and unique solvability cannot be
shown directly. We follow the ideas of Section~\ref{sec:sym} resp.
Section~\ref{sec:jn} to overcome these difficulties.
Moreover, with $b_{\rm JN}(\cdot,\cdot)$ denoting the mapping defined
in~\eqref{eq:jn:defbb}, we stress that
\begin{align}\label{eq:bmceqjn} 
  b_{\rm JN}((\uu,\pphi),(\uu,\pphi)) = b((\uu,\pphi),(\uu,\pphi))
  \quad\text{for all } (\uu,\pphi)\in\HH.
\end{align}
Thus, there is a strong relation between the one-equation Bielak-MacCamy and
Johnson-N\'ed\'elec coupling.
In fact, for linear and symmetric $\mon : \LL^2(\Omega)\to\LL^2(\Omega)$, there
holds
\begin{align*}
  b_{\rm JN}((\uu,\pphi),(\vv,\ppsi)) = b((\vv,\ppsi),(\uu,\pphi))
  \quad\text{for all } (\uu,\pphi),(\vv,\ppsi)\in\HH.
\end{align*}

\subsection{Main result}
As in Section~\ref{sec:sym:stab}, we add appropriate terms to $b(\cdot,\cdot)$
to tackle the rigid body motions in the interior domain $\Omega$. 
In particular, we use~\eqref{eq:bmc:weakformb} to stabilize the linear form
$b(\cdot,\cdot)$.
We stress that Proposition~\ref{prop:equivalence} holds with $\widetilde
b(\cdot,\cdot)$ resp. $\widetilde F(\cdot)$ replaced by
\begin{align}
  \widetilde b( (\uu,\pphi),(\vv,\ppsi)) &:= b( (\uu,\pphi),(\vv,\ppsi)) +
  \sum_{j=1}^D \dual{\xxi^j}{\slp\pphi-\uu}_\Gamma
  \dual{\xxi}{\slp\vv-\ppsi}_\Gamma,  \\
  \widetilde F(\vv,\ppsi) &:= F(\vv,\ppsi) - \sum_{j=1}^D \dual{\xxi^j}{\uu_0}_\Gamma
  \dual{\xxi}{\slp\vv-\ppsi}_\Gamma 
\end{align}
for all $(\uu,\pphi),(\vv,\ppsi)\in\HH$.
Furthermore, the assertions of Proposition~\ref{prop:richenough} also hold true
if~\eqref{eq:richenough:gj} is replaced by
\begin{align}
  g_j(\uu,\pphi) := \dual\xxi{\slp\pphi - \uu}_\Gamma \quad\text{for }
  (\uu,\pphi)\in\HH.
\end{align}
With these observations, Theorem~\ref{thm:jn:solvability} holds
true for the Bielak-MacCamy coupling. Details are left to the reader.

\begin{remark}
  Our techniques developed in Section~\ref{sec:sym}--\ref{sec:bmc} can also be
  used for the (quasi-) symmetric Bielak-MacCamy coupling
  schemes~\cite{bmc,ghs09} applied to nonlinear elasticity problems. 
\end{remark}

\appendix
\section{Elementary geometric observation}

\begin{lemma}\label{lemma:geom3d}
  Let $d=3$ and $\EE_h^\Gamma$ be a regular triangulation of the closed boundary
  $\Gamma=\partial\Omega$ into flat surface triangles.
  Then, there are at least three triangles $A,B,C\in\EE_h^\Gamma$ such that the
  centers of mass $\aa,\bb,\cc$ corresponding to these elements do not lie on
  one line, i.e. $\cc-\aa \notin \{ t(\bb-\aa) \,:\, t\in\R \}$.
\end{lemma}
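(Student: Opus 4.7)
The argument will proceed by contradiction: suppose that every triangle $E\in\EE_h^\Gamma$ has its centroid on a single line $L\subset\R^3$ with direction vector $\tau$. The plan is to combine a local vertex-star analysis with the global fact that $\Omega$ is a bounded Lipschitz polyhedron.

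The local step is to fix an arbitrary vertex $v$ of $\EE_h^\Gamma$ and examine the cyclic fan of triangles $T_i=\{v,u_i,u_{i+1}\}$, $i=1,\dots,k$ (indices mod $k\ge 3$), incident to $v$. The hypothesis $(v+u_i+u_{i+1})/3\in L$ says that each $u_i+u_{i+1}$ lies on a fixed line parallel to $\tau$; subtracting the relations for consecutive $i$ yields
\[
  u_{i+1}-u_{i-1}\parallel\tau\qquad\text{for every }i.
\]
Iterating around the cycle, for $k$ odd all link vertices $u_1,\dots,u_k$ lie on a single line parallel to $\tau$, whereas for $k$ even they split into two interleaved groups on two (possibly coinciding) lines parallel to $\tau$.

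Next I would bring in the global structure. A basic observation is that two triangles of $\EE_h^\Gamma$ sharing an edge always have distinct centroids, since the ``third vertex'' differs. Consequently, for any face $F$ of $\Omega$ (a maximal planar subregion of $\Gamma$), the triangulation of $F$ contains at most one triangle with a prescribed centroid, and hence if the plane $\pi_F$ does not contain $L$, then $\pi_F\cap L$ is a single point and $F$ must consist of a single triangle whose centroid equals that point. Boundedness of $\Omega$ forbids all face normals from lying in $\tau^\perp$ (otherwise $\tau$ would be tangent to $\Gamma$ everywhere and integral curves of the constant vector field $\tau$ starting inside $\Omega$ would never exit, contradicting boundedness), so at least one face is a single triangle whose centroid is rigidly prescribed by $L$.

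Finally I would pick $v$ to be a corner vertex of $\Omega$ and combine the local constraint (link of $v$ on one or two lines parallel to $\tau$) with the Lipschitz corner property that the incident face normals span $\R^3$. In the odd-$k$ case the star of $v$ collapses into one plane, directly contradicting the spanning property. The main obstacle, and the delicate part of the proof, is the even-$k$ case: then the link admits a two-parallel-lines configuration which locally is compatible with a three-dimensional star, and the contradiction must be extracted by combining the face-level rigidity above with the cyclic link constraints at every corner, using the global closedness of $\Gamma$ and the boundedness of $\Omega$ to rule out any consistent assembly.
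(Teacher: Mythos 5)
Your local analysis at a vertex star is exactly the paper's: assuming all centroids lie on a line with direction $\tau$, the telescoping of consecutive centroid differences gives $u_{i+1}-u_{i-1}\parallel\tau$, so the link vertices fall on one line ($k$ odd) or on two parallel lines ($k$ even). The odd case you close (the paper does so more directly, by noting that a planar star with collinear link vertices forces overlapping triangles, without needing to select a corner vertex or invoke a spanning property of face normals --- a property which for a non-convex polyhedron you would still have to justify, e.g.\ by passing to a vertex of the convex hull). The genuine gap is the even case, which you explicitly leave open: you state that the contradiction ``must be extracted by combining the face-level rigidity \dots with the cyclic link constraints at every corner,'' but you do not extract it, and it is not clear that your face-level observations (each non-$\tau$-parallel face is a single triangle with prescribed centroid) are strong enough to do so. Since the even case is precisely the delicate one, the proof is incomplete as it stands.

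For comparison, the paper finishes the even case purely locally, at the same arbitrary vertex $\xx$: if the two parallel lines $\linef$ and $\lineh$ carrying the odd- and even-indexed link vertices are distinct, then every link edge $\overline{\xx^{j}\xx^{j+1}}$ is a segment joining a point of $\linef$ to a point of $\lineh$, and these $k$ segments form a single closed cycle lying in the plane spanned by the two lines. A closed cycle of segments, each of which crosses the strip between two fixed parallel lines and whose apices all sit \emph{on} those lines, cannot be embedded without two non-adjacent segments meeting in a point that is not a common vertex; this intersection point violates the regularity (in the sense of Ciarlet) of $\EE_h^\Gamma$. No global information about $\Omega$ beyond closedness of $\Gamma$ (which guarantees $k\ge3$ triangles around every node) is needed. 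If you want to salvage your global route, you would at minimum have to show that the two-parallel-lines link configuration cannot occur at a suitably chosen vertex, and that argument is missing.
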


\begin{proof}
  We argue by contradiction. Assume that all centers of mass lie on one
  line $\lineg$.
  Let $\xx\in\KK_h^\Gamma$ denote an arbitrary node of the triangulation
  $\EE_h^\Gamma$. Recall that $\Gamma = \partial \Omega$ is the closed boundary
  of the polyhedral Lipschitz domain $\Omega$. Therefore, there are $k\geq3$ triangles
  $D_1,\dots,D_k\in\EE_h^\Gamma$ such that $\xx$ is a corner of $D_j$ for
  $j=1,\dots,k$.
  \begin{figure}[!htb]
    \begin{center}
      \psfrag{xx}{\tiny $\xx$}
      \psfrag{D1}{\tiny $D_1$}
      \psfrag{D2}{\tiny $D_2$}
      \psfrag{D3}{\tiny $D_3$}
      \psfrag{D4}{\tiny $D_4$}
      \psfrag{D5}{\tiny $D_5$}
      \psfrag{D6}{\tiny $D_6$}

      \psfrag{x1}{\tiny $\xx^1$}
      \psfrag{x2}{\tiny $\xx^2$}
      \psfrag{x3}{\tiny $\xx^3$}
      \psfrag{x4}{\tiny $\xx^4$}
      \psfrag{x5}{\tiny $\xx^5$}
      \psfrag{x6}{\tiny $\xx^6$}

      \includegraphics[width=0.5\textwidth]{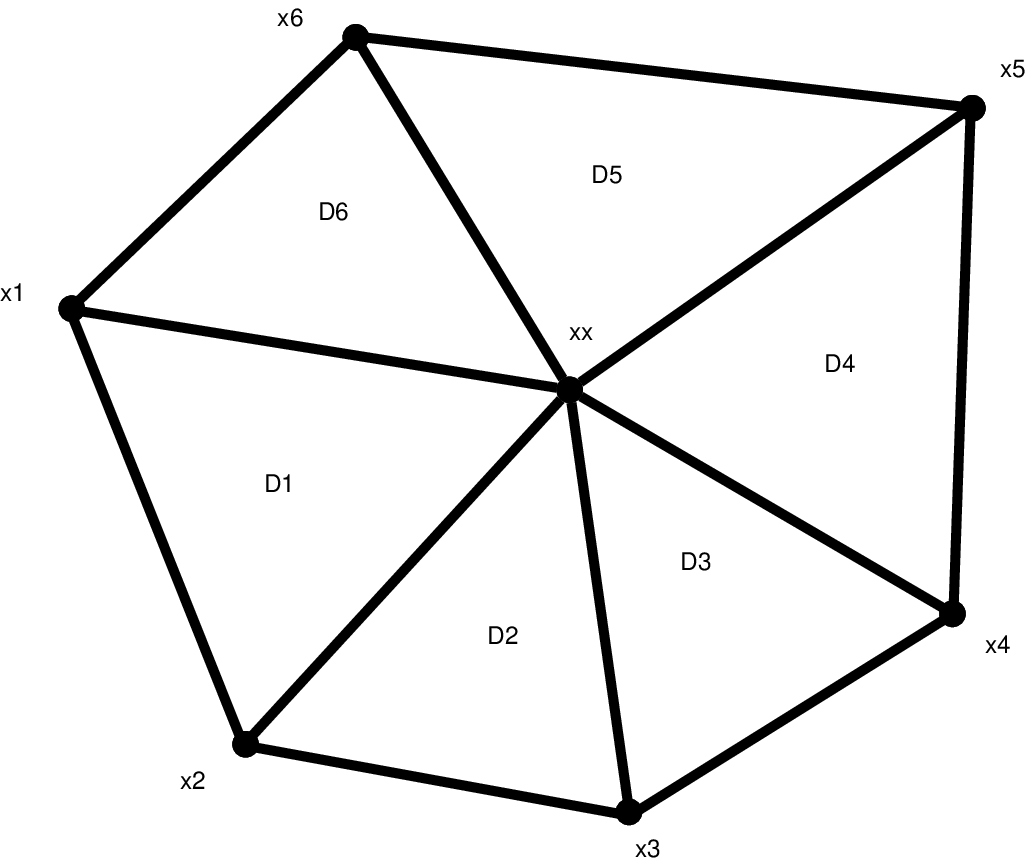}
    \end{center}
    \caption{For any node $\xx\in\KK_h^\Gamma$ in a regular triangulation
    $\EE_h^\Gamma$ of a closed boundary $\Gamma$, there
    exist $k\geq3$ different nodes $\xx^1,\dots,\xx^k\in\KK_h^\Gamma$ and
    triangles $D_1,\dots,D_k$ such that $D_j = \convhull{\xx,\xx^j,\xx^{j+1}}$ for
    all $j=1,\dots,k$. 
    Moreover, there holds $D_j\cap D_{j+1} = \convhull{\xx,\xx^{j+1}}$ for
    $j=1,\dots,k$ with $\xx^{k+1}=\xx^1$.
    Here, an example for $k=6$ is shown.}
    \label{fig:nodeWithTriangles}
  \end{figure}
  Moreover, let $\{\xx,\xx^1,\dots,\xx^k\}$ denote the set of all nodes of the
  triangles $D_1,\dots,D_k$. We stress that we can permute the indices of
  $\xx^1,\dots,\xx^k$ and the indices of $D_1,\dots,D_k$ such that $D_j =
  \convhull{\xx,\xx^j,\xx^{j+1}}$, where we define $\xx^{k+1}:=\xx^1$ and
  $\xx^{k+2}:=\xx^2$, see
  Figure~\ref{fig:nodeWithTriangles} for an illustration in the case $k=6$. 
  Let $\cm^1,\dots,\cm^k$ denote the centers of mass of the triangles
  $D_1,\dots,D_k$.
  Because of our assumption that all centers of mass lie on one line $\lineg$, we
  infer that
  \begin{align*}
    \cm^{j+1} - \cm^j = \frac{\xx^{j+2}+\xx^{j+1}+\xx}3
    -\frac{\xx^{j+1}+\xx^{j}+\xx}3 = \frac{\xx^{j+2}-\xx^j}3
  \end{align*}
  is proportional to the directional vector $\dd\neq\bignull$ of the line
  $\lineg$. Therefore,
  $\xx^{j+2}-\xx^j$ is also proportional to $\dd$.
  Moreover, we observe $t\dd = \cm^{j+3}-\cm^{j+2}+\cm^{j+1}-\cm^j =
  (\xx^{j+4}-\xx^j)/3$ for some $t\in\R$. By iterating this process, we
  get with appropriate $t_m, t_n \in \R$
  \begin{align*}
    t_m\dd &=\sum_{j=1}^{2m} (-1)^j \cm^j = (\xx^{2m+1}-\xx^1)/3 \quad\text{for
    all $m$ with } 2m\leq k, \\
    t_n\dd &= \sum_{j=2}^{2n-1} (-1)^{j+1} \cm^j = (\xx^{2n} - \xx^2)/3
    \quad\text{for all $n$ with } 2n-1\leq k.
  \end{align*}
  Altogether, we see that all nodes with even indices lie on one line $\lineh$, and
  all nodes with odd indices lie on one parallel line $\linef$, i.e.
  \begin{align*}
    \xx^{2j} \in \left\{ \xx^2 + t\dd \,:\, t\in\R \right\} &=: \lineh
    \quad\text{and} \\
    \xx^{2j-1} \in \left\{ \xx^1 + t\dd \,:\, t\in\R \right\} &=: \linef
  \end{align*}
  for all $j\in\N$ with $2j\leq k$ resp. $2j-1\leq k$.
  For the remainder of the proof, we distinguish whether $k$ is odd or
  even.

  \textbf{Case 1 } ($k$ is odd).
  The observations above show that $\xx^1,\xx^3,\dots,\xx^k \in \linef$ and
  $\xx^2-\xx^k = \xx^{k+2}-\xx^k = t\dd$ for some $t\in\R$. Then, $\xx^2\in
  \linef$
  and since $\linef$ and $\lineh$ are parallel, there holds $\lineh=\linef$, which means that all
  nodes $\xx^1,\dots,\xx^k$ lie on one line.
  This contradicts a regular triangulation.

  \textbf{Case 2 } ($k$ is even).
  If $\lineh=\linef$ we can argue as in Case~1.
  Otherwise $\lineh\neq \linef$, and we stress that all edges $\overline{\xx^j\xx^{j+1}}$
  of the triangles are connected, i.e.
  \begin{align*}
    \overline{\xx^j\xx^{j+1}} \cap \overline{\xx^n\xx^{n+1}} = 
    \begin{cases}
      \{\xx^j\} & \text{if } j=n+1 \\
      \{\xx^{j+1}\} & \text{if } j+1=n \\
      \emptyset & \text{otherwise}.
    \end{cases}
  \end{align*}
  Moreover, every edge $\overline{\xx^j\xx^{j+1}}$ connects the lines $\lineh$ and
  $\linef$.
  Thus, we can infer that there are two edges $\overline{\xx^m\xx^{m+1}}$,
  $\overline{\xx^n\xx^{n+1}}$ which intersect each other, i.e.
  \begin{align*}
    \overline{\xx^m\xx^{m+1}}\cap \overline{\xx^n\xx^{n+1}} = \{\yy\}
    \quad\text{with } \yy \neq \xx^n \text{ and } \yy\neq \xx^{n+1},
  \end{align*}
  see also Figure~\ref{fig:intersect} for an illustration.
  Altogether this contradicts a regular triangulation.
  \begin{figure}[!htb]
    \begin{center}
      \psfrag{h}{\tiny $\lineh$}
      \psfrag{f}{\tiny $\linef$}
      \psfrag{x2n}{\tiny $\xx^{2n}$}
      \psfrag{x2n1}{\tiny $\xx^{2n+1}$}
      \psfrag{x2m}{\tiny $\xx^{2m}$}
      \psfrag{x2m1}{\tiny $\xx^{2m+1}$}

      \includegraphics[width=0.5\textwidth]{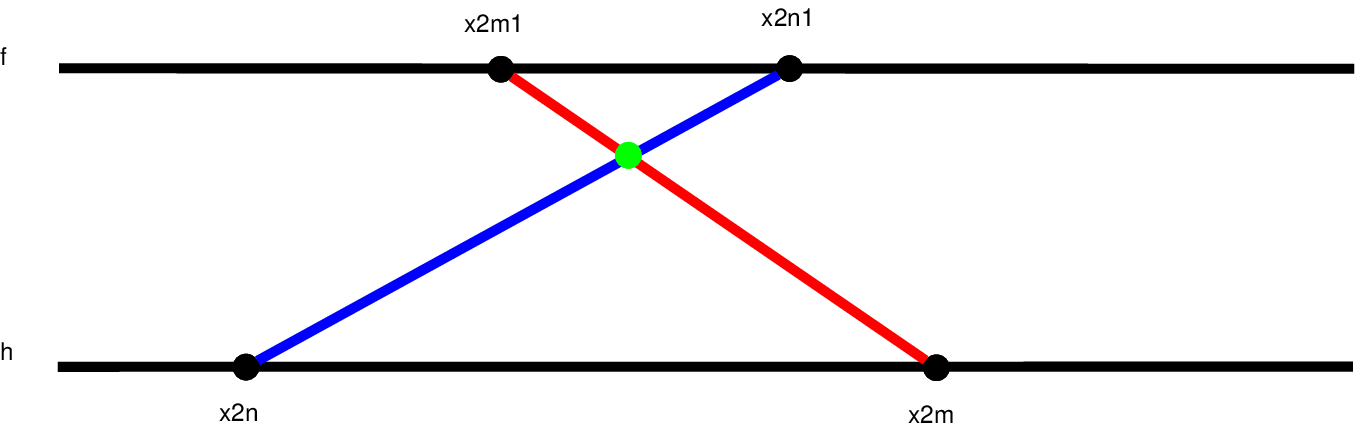}
    \end{center}
    \caption{Let $\xx\in\KK_h^\Gamma$ be an arbitrary node in the regular
    triangulation $\EE_h^\Gamma$ and let $\xx^1,\dots,\xx^k$ denote the
    neighboring nodes. Under the assumption that all centers of mass of the
    triangles in $\EE_h^\Gamma$ lie on a line $\lineg$, the proof of
    Lemma~\ref{lemma:geom3d} unveils that the nodes $\xx^{2j-1}$ lie on a line
    $\linef$ and the nodes $\xx^{2j}$ lie on a line $\lineh$, which is parallel to
    $\linef$.
    Since all $\xx^j$ are connected by the segments $\overline{\xx^j\xx^{j+1}}$,
    we can conclude that there are indices $n,m$ with $2n\leq k, 2m\leq
    k$ such that $\overline{\xx^{2n}\xx^{2n+1}}\cap
    \overline{\xx^{2m}\xx^{2m+1}} = \{\yy\}$ and $\yy\notin\KK_h^\Gamma$.
    }
    \label{fig:intersect}
  \end{figure}
\end{proof}

\bibliographystyle{alpha}
\bibliography{literature}

\end{document}